\newtheorem{theorem}{Theorem}[section] 
\newtheorem{lemma}[theorem]{Lemma}
\newtheorem{corollary}[theorem]{Corollary}
\theoremstyle{definition}
\theoremstyle{remark}
\newtheorem{remark}[theorem]{Remark}
\numberwithin{equation}{section}
\newcommand{\p}{{\partial}}  
\newcommand{\nab}{\nabla}
\newcommand{\mct}{\mathcal{T}_h}
\newcommand{\tbar}[1]{|\hspace{-0.03cm}|\hspace{-0.03cm}|#1|\hspace{-0.03cm}|\hspace{-0.03cm}|}
\newcommand{\bH}{{\bm H}}
\newcommand{\Div}{{\rm div}\,}
\newcommand{\bv}{{\bm v}}
\newcommand{\bV}{{\bm V}}
\newcommand{\bz}{\bm z}
\newcommand{\bbR}{\mathbb{R}}
\newcommand{\pol}{\EuScript{P}}
\newcommand{\bpol}{\boldsymbol{\pol}}
\newcommand{\calV}{\mathcal{V}}
\newcommand{\bw}{\bm w}
\newcommand{\bn}{\bm n}
\newcommand{\bt}{\bm t}
\newcommand{\bu}{\bm u}
\newcommand{\bZ}{\bm Z}
\newcommand{\calE}{\mathcal{E}}
\newcommand{\calM}{\mathcal{M}}
\newcommand{\calT}{\mathcal{T}}
\newcommand{\calS}{\mathcal{S}}
\newcommand{\mc}{\mathfrak{c}}
\newcommand{\bd}{{\bm d}}
\newcommand{\bp}{{\bm p}}
\newcommand{\by}{{\bm y}}
\date{}
\title[Divergence-free FEM with boundary correction]{A divergence-free finite element method for the Stokes problem with boundary correction}
\author[H.~Liu, M.~Neilan, and M.B.~Otus]{Haoran Liu\email{HAL104@pitt.edu} \and Michael Neilan\email{neilan@pitt.edu} \and M. Baris Otus \email{MBO13@pitt.edu}}
 \address{Department of Mathematics, University of Pittsburgh, Pittsburgh, PA 15260}
 \thanks{This work was supported in part by the National Science Foundation through
 grant number DMS-2011733.}
 \keywords{finite elements, Stokes, boundary correction, divergence-free}
 \subjclass{65N30,65N12,76M10}
\begin{document}

\maketitle

\begin{abstract}
This paper constructs and analyzes a boundary correction finite element method for the Stokes problem 
based on  the Scott-Vogelius pair on Clough-Tocher splits. The velocity space consists of continuous piecewise quadratic polynomials, and the pressure space consists of piecewise linear polynomials without  continuity constraints. A Lagrange multiplier space that consists of continuous piecewise quadratic polynomials with respect to boundary partition is introduced to enforce boundary conditions as well as to mitigate the lack of pressure-robustness.  We prove several inf-sup conditions, leading
to the  well-posedness of the method.  In addition, we show that the method converges with optimal order
and the velocity approximation is divergence free.
\end{abstract}


\section{Introduction}

Boundary correction methods are a broad class of unfitted finite element methods, i.e.,
methods in which the computational mesh does not conform to the physical domain $\Omega$.
In contrast to, e.g., isoparametric methods, in which a domain is approximated
via curved elements, boundary correction methods generally solve a PDE
in a polytopal interior domain and transfer boundary conditions in a way such 
that the scheme still maintains optimal order convergence.
This polytopal approximation is, in general, not an $O(h^2)$ approximation
to the physical domain and in particular, the polytope's vertices are not necessarily
on the boundary of $\Omega$.
This approach can be advantageous for, e.g., dynamic problems with moving boundaries,
as remeshing is not needed at each time step.  
Another feature of boundary correction methods, in contrast to other unfitted schemes,  is the absence of `cut elements'
which may require special quadrature formula and algebraic stabilization.
Boundary correction methods were first
introduced and analyzed nearly $50$ years ago \cite{BDT72}
for the Poisson problem, and the technique has been
improved and refined recently resulting in practical and robust implementations 
\cite{CockburnSolano12,OSZ20,MainScovazzi18,ACS20A,ACS20B,ACS20C}
(see also \cite{BK94,DGS20} for variants).

In this article, we construct
a  boundary correction finite element method for the Stokes problem
based on the Scott-Vogelius pair on Clough-Tocher (or Alfeld) splits.  The velocity approximation is sought
in the space of continuous piecewise quadratic polynomials, whereas 
the pressure space is approximated by piecewise linear polynomials
without continuity constraints.  From their definitions, we see that the divergence operator maps 
the velocity space into the pressure space, and therefore, the scheme
yields divergence-free velocity approximations.  As far as we are aware
this is the first $H^1$-conforming divergence--free finite element method for incompressible flow
on unfitted meshes.

The construction and analysis of divergence-free methods
is an active area of research, and many
schemes have been proposed \cite{JohnetAl17,ArnoldQin92,Zhang05,NeilanFalk13,GuzmanNeilan14,GuzmanNeilan18}. 
 These schemes have several inherent advantages, e.g., exact conservation laws
for any mesh size and long-time stability \cite{RebholzEtAl17,BelenliRebTone15}.  Another feature of these schemes
is pressure-robustness; similar to the continuous setting, modifying the source term in the Stokes problem
by a gradient field only affects the pressure approximation.  This feature leads
to a decoupling in the velocity error, with abstract estimates independent of the viscosity.
Thus, divergence-free schemes may be advantageous for high Reynold number
flows and/or flows with large pressure gradients \cite{SchroederEtal18,SchroederLube18,Linke09}.   Except for the recent work \cite{NeilanOtus21}, where
isoparametric methods are introduced and studied, all of these divergence--free methods are applied to PDEs
on polytopal domains.

Let us describe the scheme in more detail and briefly summarize the context
of our results.  
The method starts with a background mesh enveloping the domain $\Omega$,
and the computational mesh simply consists of those
elements fully contained in $\bar\Omega$.  The method
is based on a standard Nitsche-based formulation,
where the Dirichlet
boundary conditions are enforced via penalization.
As the computational domain does not conform 
to $\Omega$, boundary conditions are corrected
via simple applications of Taylor's theorem to reduce
the inconsistency of the scheme.   

The procedure
described so far is relatively standard for the Poisson problem (cf.~\cite{BDT72,MainScovazzi18,ACS20A,ACS20B,ACS20C}),
but leads to some pressing issues for the Stokes equations.
First, because the computational domain explicitly depends on the mesh parameter $h$,
inf-sup stability of the Stokes pair is not immediately obvious.
As explained in \cite{GuzmanMaxim18},
the standard proof of inf-sup stability in the continuous setting (which
is needed for the discrete result) is based on a decomposition of the computational domain
into a finite number of strictly star shaped domains; the number
of star shaped domains is generally unbounded as $h\to 0$.
This issue can be circumvented with pressure-stabilization \cite{MainScovazzi18,ACS20B}, but at the price
of additional consistency errors and poor conservation properties.
We address this stability issue by designing the computational mesh
such that it inherits a macro element structure and 
applying the framework developed in \cite{GuzmanMaxim18}
for Stokes pairs on unfitted domains.  Doing so, we show
that the resulting pair is uniformly stable on the unfitted domain with respect
to the discretization parameter.

 The second difficulty of a boundary correction
method for the Stokes problem is its lack of pressure-robustness.
This feature is not due to the boundary correction per se, but rather
due to the weak enforcement of boundary conditions via penalization.
In particular, a divergence-free method for the Stokes problem
with weak enforcement of the boundary conditions is {\em not} pressure robust.
We mitigate the lack of pressure robustness in the scheme by introducing
an additional Lagrange multiplier that enforces the boundary conditions of the normal component of the velocity.
The Lagrange multiplier space consists of continuous piecewise quadratic polynomials
with respect to the boundary partition, and the Lagrange multiplier is an approximation to the pressure (modulo
an additive constant) restricted to the computational boundary.  
The Lagrange multiplier ameliorates the lack of pressure robustness of the method
and leads to a weakly coupled velocity error estimate; the velocity error's dependence on the viscosity
is compensated by a higher-order power of the discretization parameter $h$.
We remark that Lagrange multipliers within boundary correction schemes have been proposed and studied in \cite{BurmanHansboLarson20,CPBG19}
for the Poisson problem.

The rest of the paper is organized as follows.
In the next section, we state the Stokes problem, 
the computational mesh, and the boundary transfer operator.
In Section \ref{sec-FEM}, we state the finite element method
and show that the scheme yields exactly divergence--free velocity approximations.
Section \ref{sec-stability} proves several inf-sup conditions and 
the well-posedness of the method.
In Section \ref{sec-convergence}, we prove optimal order convergence provided
the exact solution is sufficiently smooth.  Finally, in Section \ref{sec-Numerics}
we perform some numerical experiments which verify the theoretical results,
and give some concluding remarks in Section \ref{sec-conclusion}.

\section{Preliminaries}
For a two-dimensional bounded domain $\Omega\subset \bbR^2$,
we consider the Stokes problem
\begin{subequations}
\label{eqn:Stokes}
\begin{alignat}{2}
\label{eqn:Stokes1}
-\nu \Delta \bu + \nab p & = {\bm f}\qquad &&\text{in }\Omega,\\
\Div \bu & = 0\qquad &&\text{in }\Omega,\\
\bu& = {\bm g}\qquad &&\text{on }\p \Omega,
\end{alignat}
\end{subequations}
where $\nu>0$ is the viscosity, assumed to be constant.
For simplicity in the presentation, and without loss of generality, we assume
that ${\bm g}=0$.  The extension to non-homogeneous boundary conditions
is relatively straight-forward \cite{HeisterLeo16}.

We assume the domain has smooth boundary $\p\Omega$ with outward unit normal
$\bn$.
We denote by $\phi$ the signed distance
function of $\Omega$ such that $\phi(x)<0$ for $x\in \Omega$
and $\phi(x)\ge 0$ otherwise, so that $\bn = \nab \phi/|\nab \phi|$
on $\p\Omega$.  For a positive number $\tau$,
denote by $\Gamma_\tau = \{x\in \bbR^2: |\phi(x)|\le \tau|\}$
the tubular region around $\p\Omega$.
By \cite[Lemma 14.16]{GilbargTrudinger},
there exists $\tau_0>0$ such
the closest point projection $\bp:\Gamma_{\tau_0}\to \p\Omega$
is well defined and satisfies
$\bp(x) = x - \phi(x)\bn(\bp(x))$ for all $x\in \Gamma_{\tau_0}$ \cite{BurmanHansboLarson20}.

Let $S\subset \bbR^2$ be a polygon
such that $\Omega\subset S$, and let $\calS_h$
be a quasi-uniform triangulation of $S$ that consists of shape regular triangles.  
We  define the computational mesh
as
\[
\mct = \{T\in \calS_h:\ \bar T\subset \bar \Omega\},
\]
and set
\[
\Omega_h = {\rm int}\Big(\bigcup_{T\in \mct} \bar T\Big)\subset \Omega
\]
to be the associated domain.
We denote by $\calT_h^{ct}$ the Clough-Tocher refinement of $\mct$, obtained
by connecting the vertices of each $T\in \mct$ to its barycenter.
The set of boundary of edges of $\calT_h$, which is also the set of boundary
edges of $\calT_h^{ct}$, is denoted by $\calE^B_h$.
With an abuse of notation, for a piecewise smooth function $q$ (with respect to $\calE_h^B$), we
write
\[
\int_{\p\Omega_h} q\, ds = \sum_{e\in \calE_h^B} \int_e q\, ds.
\]
We use $\bn_h$ to denote the outward unit normal with respect
to the computational boundary $\p\Omega_h$.
For $K\in \calT_h^{ct}$, we set $h_K = {\rm diam}(K)$
and $h = \max_{K\in \calT_h^{ct}} h_K$.
Likewise, for $e\in \calE_h^B$, we set $h_e = {\rm diam}(e)$.

\begin{remark}
Denote by $\calS_h^{ct}$ the Clough-Tocher refinement
of the background mesh $\calS_h$.
We emphasize that $\calT_h^{ct}\subset \calS_h^{ct}$, however,
\[
\calT_h^{ct}\neq \{K\in \calS_h^{ct}:\ \bar K\subset \bar\Omega\}.
\]
In particular, $\calT_h^{ct}$ inherits
the macro-element structure needed to prove
the stability of the Scott-Vogelius pair.
\end{remark}

\subsection{Boundary transfer operator}
The main component of boundary correction methods
is a well-defined mapping $M:\p\Omega_h\to \p\Omega$ that assigns
each point on the computational boundary to physical one in order to ``transfer'' the boundary
information on $\p\Omega$ to $\p\Omega_h$.
With such a mapping in hand, we can define
the transfer direction  as
\[
\frak{\bd}({x})=(M-I){x}\qquad x\in \p\Omega_h,
\]
and transfer length
\begin{equation}\label{eqn:deltaDef}
\delta(x)  = |\frak{\bd}(x)|.
\end{equation}

Several choices of the mapping $M$
and corresponding transfer directions have appeared in 
the literature. A common choice
(and arguably the most natural)
is to take $M$ to be the closest point projection, i.e., $M = \bp$.
In this case, assuming $\Omega_h$ approximates 
$\Omega$ well enough, the distance vector $\frak{\bd}$ defined above coincides (up to a multiplicative constant)
with the outward unit normal vector $\bn$ of the original boundary $\partial \Omega$.
In particular, there holds $\frak{\bd}(x) = \phi(x)\bn(\bp(x))$ and $\delta(x) = |\phi(x)|$.
Another common choice is to take the transfer direction to be parallel to the outward unit normal
 of the computational boundary, i.e., $\frak{\bd}/\delta =   \bn_h$.
 In this case, we have $\delta(x)\ge |\phi(x)|$ with possible large discrepancies between $\delta(x)$ and $|\phi(x)|$, but it leads
 to a simpler implementation in the numerical method.

In the definition and analysis of the method
below, we do not explicitly define the mapping $M$;
rather, our main requirement
for the mapping $M$ is to satisfy the Assumption \eqref{eqn:Assumption} below.
In particular, and similar to \cite{BDT72,OSZ20,BurmanHansboLarson20,ACS20A,ACS20B,ACS20C}, the stability and convergence analysis
only assumes that the transfer distance $\delta(x)$ is sufficiently
small relative to the mesh parameter $h$.
In the numerical experiments provided in Section \ref{sec-Numerics},
we take $M$ to be an approximation to the closest point projection.

Set $\bd = \frak{\bd}/{\delta}$, 
for $x \in \partial \Omega_h$, and define the boundary transfer operator
\[
(S_h \bv)(x) = \bv(x)+\delta(x) \frac{\p \bv}{\p \bd}(x)+ \frac12 (\delta(x))^2 \frac{\p^2 \bv}{\p \bd^2}(x).
\]
Note that  $(S_h\bv)(x)$ is the second-order Taylor expansion of the function $\bv$.
\begin{remark}
Throughout this paper, the  constants $C$ and $c$ (with or without subscripts) denote some positive constants that are independent of the mesh parameter $h$ and the viscosity.
\end{remark}

\section{A divergence--free finite element method}\label{sec-FEM}
For $D\subset \bbR^d$,
denote by $\pol_k(D)$ the space of polynomials
of degree $\le k$ with domain $D$.
Analogous vector-valued spaces are denoted in boldface.
We define the lowest-order Scott-Vogelius finite element pair
with respect to the Clough-Tocher triangulation $\mct^{ct}$:
\begin{align*}
\bV_h & = \{\bv\in \bH^1(\Omega_h):\ \bv|_K\in \bpol_2(K)\ \forall K\in \mct^{ct},\ \int_{\p\Omega_h} (\bv\cdot \bn_h)\, ds=0\},\\
Q_h & = \{q\in L^2(\Omega_h):\ q|_K\in \pol_1(K)\ \forall K\in \mct^{ct}\},
\end{align*}
and the analogous spaces with boundary conditions
\begin{align*}
\mathring{\bV}_h & = \bV_h\cap \bH^1_0(\Omega_h),\qquad
\mathring{Q}_h  = Q_h\cap L^2_0(\Omega_h).
\end{align*}
We further introduce a Lagrange multiplier space 
\begin{align*}
X_h = \{\mu\in C(\p\Omega_h):\ \mu|_e \in \pol_2(e)\ \forall e\in \calE_h^B\},
\end{align*}
and its variant,
\[
\mathring{X}_h = \{\mu\in X_h:\ \int_{\p\Omega_h} \mu\, ds = 0\}.
\]

We define the bilinear form
\begin{align*}
a_h(\bu,\bv) &=\nu\Big( \int_{\Omega_h} \nab \bu :\nab \bv\, dx -  \int_{\p\Omega_h} \frac{\p \bu}{\p \bn_h} \cdot \bv\, ds
+ \int_{\p\Omega_h} \frac{\p \bv}{\p \bn_h}\cdot (S_h \bu)\, ds,\\
&\qquad + \sum_{e\in \calE_h^B} \int_{e} \frac{\sigma}{h_e} (S_h \bu)\cdot (S_h \bv)\, ds\Big),
\end{align*}
 where $\sigma>0$ is a penalty parameter.

\begin{remark}
The bilinear form $a_h(\cdot,\cdot)$ is based on a standard ``Nitsche bilinear form'' associated
with the Laplace operator, but with boundary correction \cite{Nitsche71,RiviereBook}.
Note that the bilinear form 
is based on a non-symmetric version of Nitsche's method
due to the positive sign in front of the third term
in the bilinear form $a_h(\cdot,\cdot)$.
However, boundary correction methods based on the symmetric version
of Nitsche's method still yield a non-symmetric
bilinear form \cite{BDT72,MainScovazzi18}.
The non-symmetric version allows less restrictions on the penalty parameter $\sigma$ to ensure
stability if the extension direction coincides with the outward unit
normal of $\p\Omega_h$.
In particular, if $\bd = \bn_h$,
a standard argument shows that the bilinear form $a_h(\cdot,\cdot)$ is coercive on $\bV_h$
for any $\sigma>0$; cf.~Lemma \ref{lem:coercivity}.
\end{remark}

We define two bilinear forms associated with the continuity equations, one without and one with 
boundary correction:
\begin{align*}
  b_h(\bv,(q,\mu)) &= -\int_{\Omega_h} (\Div \bv)q\, dx + \int_{\p\Omega_h} (\bv\cdot \bn_h)\mu\, ds,\\  
  b^e_h(\bv,(q,\mu)) &= -\int_{\Omega_h} (\Div \bv)q\, dx +  \int_{\p\Omega_h} ((S_h\bv)\cdot \bn_h)\mu\, ds.
 \end{align*}

We consider the method of finding
$(\bu_h,p_h,\lambda_h)\in \bV_h\times \mathring{Q}_h\times \mathring{X}_h$ such that
\begin{subequations}
\label{eqn:LMM}
\begin{alignat}{2}
\label{eqn:LMM1}
a_h(\bu_h,\bv) +b_h(\bv,(p_h,\lambda_h)) & = \int_{\Omega_h} {\bm f}\cdot \bv\, dx
\qquad &&\forall \bv\in \bV_h,\\
\label{eqn:LMM2}
b^e_h(\bu_h,(q,\mu)) & = 0
\qquad &&\forall (q,\mu)\in \mathring{Q}_h\times \mathring{X}_h.
\end{alignat}
\end{subequations}

\begin{remark}
The zero mean-value constant
defined in the Lagrange multiplier space $\mathring{X}_h$
mods out constants, and is due to the condition $\int_{\p\Omega_h} (\bv\cdot \bn_h)\,ds=0$
in the definition of the discrete velocity space $\bV_h$.  
If this constraint is not imposed in the Lagrange multiplier space,
then in general \eqref{eqn:LMM} is ill-posed since
\[
b_h(\bv,(0,1)) = 0\qquad \forall \bv\in \bV_h.
\]
On the other hand, the constraint $\int_{\p\Omega_h} (\bv\cdot \bn_h)\, ds=0$
is needed to ensure that method \eqref{eqn:LMM} yields
a divergence-free solution, as the next lemma shows.
\end{remark}

\begin{lemma}[Divergence--free property]\label{lem:DivFree}
If $(\bu_h,p_h,\lambda_h)\in \bV_h\times \mathring{Q}_h\times \mathring{X}_h$
satisfies \eqref{eqn:LMM}, then $\Div \bu_h\equiv 0$ in $\Omega_h$.
\end{lemma}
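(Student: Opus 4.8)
The plan is to test the second equation \eqref{eqn:LMM2} with a suitable pair $(q,\mu)$ chosen so that only the volume term $-\int_{\Omega_h}(\Div\bu_h)q\,dx$ survives, and then exploit the fact that $\Div$ maps $\bV_h$ into $Q_h$. The natural candidate is $q = \Div\bu_h$, which indeed lies in $Q_h$ since $\bu_h|_K\in\bpol_2(K)$ implies $\Div\bu_h|_K\in\pol_1(K)$ for each $K\in\mct^{ct}$. The only subtlety is that the test function in \eqref{eqn:LMM2} must come from $\mathring{Q}_h$, i.e. it must have zero mean over $\Omega_h$. So first I would check that $\Div\bu_h$ already has zero mean: by the divergence theorem, $\int_{\Omega_h}\Div\bu_h\,dx = \int_{\p\Omega_h}(\bu_h\cdot\bn_h)\,ds = 0$, where the last equality is precisely the constraint built into the definition of $\bV_h$. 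Hence $q:=\Div\bu_h\in\mathring{Q}_h$ is an admissible test function.

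Next I would take $\mu = 0\in\mathring{X}_h$, so that the boundary term $\int_{\p\Omega_h}((S_h\bu_h)\cdot\bn_h)\mu\,ds$ in $b_h^e$ vanishes, and \eqref{eqn:LMM2} reduces to
\[
-\int_{\Omega_h}(\Div\bu_h)^2\,dx = 0.
\]
This forces $\Div\bu_h\equiv 0$ in $\Omega_h$, which is the claim.

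I do not expect any real obstacle here; this is the standard argument for Scott–Vogelius-type pairs, and the one delicate point — admissibility of $\Div\bu_h$ as a zero-mean test function — is handled by the normal-flux constraint in $\bV_h$, whose role is flagged in the remark immediately preceding the lemma. Everything else is a one-line computation. If one wanted to be careful, one could note that the argument uses only the form $b_h^e$ evaluated against $(q,0)$ and never touches the boundary-correction operator $S_h$ or the Lagrange multiplier, so the divergence-free property is completely insensitive to those choices.
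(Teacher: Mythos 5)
Your proposal is correct and is essentially the paper's own argument: test \eqref{eqn:LMM2} with $(q,\mu)=(\Div\bu_h,0)$ and conclude $\|\Div\bu_h\|_{L^2(\Omega_h)}^2=0$. Your explicit verification that $\Div\bu_h$ has zero mean (via the divergence theorem and the normal-flux constraint in $\bV_h$) is a nice elaboration of a step the paper leaves implicit, but it is not a different route.
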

\begin{proof}
The definition of the Stokes pair $\bV_h\times \mathring{Q}_h$
shows $\Div \bu_h\in \mathring{Q}_h$.
Then, letting $q=\Div \bu_h$ and  $\mu=0$ in \eqref{eqn:LMM2} yields
\[
0 = b^e_h(\bu_h,(\Div \bu_h,0))  = -\|\Div \bu_h\|_{L^2(\Omega_h)}^2.
\]
Thus, $\Div \bu_h \equiv 0$.
\end{proof}

\section{Stability and Continuity estimates}\label{sec-stability}


In our stability and convergence analysis,
we make an assumption regarding the distance
between the PDE domain $\Omega$
and the computational domain $\Omega_h$.
To state this assumption, we define for a boundary edge $e\in \calE_h^B$,
\[
\delta_e := \max_{x\in \bar e} \delta(x).
\]
We make the assumption
\begin{equation}\tag{A}\label{eqn:Assumption}
\max_{e\in \calE_h^B} h_e^{-1} \delta_e \le c_\delta<1,\qquad \text{for }c_\delta\text{ sufficiently small.}
\end{equation}
\begin{remark}
 Assumption \eqref{eqn:Assumption} essentially states
 that the distance between $\p\Omega$ and $\p\Omega_h$ is of order $h$, i.e.,
$\delta = O(h)$ with (hidden) constant
sufficiently small.  Similar assumptions, in the context
of boundary correction methods, are made in, e.g.,
\cite{BDT72,OSZ19,MainScovazzi18,ACS20A,ACS20B}.
\end{remark}

We define three $H^1$-type norms on $\bV_h$:
\begin{align*}
\|\bv\|_{h}^2 
& = \|\nab \bv\|_{L^2(\Omega_h)}^2 +\sum_{e\in \calE_h^B} h_e^{-1}\|S_h \bv\|_{L^2(e)}^2,\\
\|\bv\|_{1,h}^2
& = \|\nab \bv\|_{L^2(\Omega_h)}^2 + \sum_{e\in \calE_h^B} h_e^{-1} \|\bv\|_{L^2(e)}^2,\\
\tbar{\bv}^2_{h}
&= \|\bv\|_{h}^2 + \sum_{e\in \calE_h^B} h_e \|\nab \bv\|_{L^2(e)}^2.
\end{align*}
In addition, we define a $H^{-1/2}$-norm on the Lagrange multiplier space $\mathring{X}_h$:
\[
\|\mu\|_{-1/2,h}^2  = \sum_{e\in \calE_h^B} h_e \|\mu \|_{L^2(e)}^2.
\]
Finally, we define the norm on $\mathring{Q}_h\times \mathring{X}_h$ as
\[
\|(q,\mu)\|:=\|q\|_{L^2(\Omega_h)}+\|\mu\|_{-1/2,h}.
\]

\begin{lemma}\label{lem:EquivNorms}
There holds for all $\bv\in \bV_h$,
\begin{align}\label{eqn:ShvDiff}
\sum_{e\in \calE_h^B} h_e^{-1} \|S_h \bv-\bv\|_{L^2(e)}^2\le C c_\delta \|\nab \bv\|_{L^2(\Omega_h)}^2,\\
\nonumber \sum_{e\in \calE_h^B} h_e^{-1} \|S_h \bv\|_{L^2(e)}^2\le C \|\bv\|_{1,h}^2,
\end{align}
provided that $c_\delta$ in \eqref{eqn:Assumption} is sufficiently small.
In particular, $\|\cdot\|_h$, $\|\cdot\|_{1,h}$, and $\tbar{\cdot}_{h}$ are equivalent on $\bV_h$.
\end{lemma}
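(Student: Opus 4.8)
The plan is to establish the two displayed inequalities and then deduce the norm equivalence by elementary manipulations. I would start with the first inequality~\eqref{eqn:ShvDiff}. By the definition of the boundary transfer operator $S_h$, on each edge $e\in\calE_h^B$ we have
\[
(S_h\bv)(x)-\bv(x) = \delta(x)\frac{\p\bv}{\p\bd}(x) + \tfrac12(\delta(x))^2\frac{\p^2\bv}{\p\bd^2}(x),
\]
so pointwise $|S_h\bv-\bv|\le \delta|\nab\bv| + \tfrac12\delta^2|\nab^2\bv|$ on $e$ (using $|\bd|=1$). Squaring, integrating over $e$, and using $\delta(x)\le\delta_e\le c_\delta h_e$ from Assumption~\eqref{eqn:Assumption} gives
\[
h_e^{-1}\|S_h\bv-\bv\|_{L^2(e)}^2 \lesssim c_\delta^2 h_e \|\nab\bv\|_{L^2(e)}^2 + c_\delta^4 h_e^3 \|\nab^2\bv\|_{L^2(e)}^2.
\]
Now I would invoke a trace inequality together with an inverse estimate on the Clough--Tocher element(s) abutting $e$: since $\bv|_K\in\bpol_2(K)$, one has $h_e\|\nab\bv\|_{L^2(e)}^2 \lesssim \|\nab\bv\|_{L^2(K)}^2$ and $h_e^3\|\nab^2\bv\|_{L^2(e)}^2 \lesssim h_K^2\|\nab^2\bv\|_{L^2(K)}^2 \lesssim \|\nab\bv\|_{L^2(K)}^2$, where the last step is the inverse inequality on polynomials of degree $2$ (and shape-regularity makes $h_e\simeq h_K$). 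Summing over boundary edges and absorbing the powers of $c_\delta<1$ yields the bound $C c_\delta\|\nab\bv\|_{L^2(\Omega_h)}^2$ as claimed; care is only needed to note that each $K\in\mct^{ct}$ appears boundedly often in the sum.

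For the second inequality, write $S_h\bv = (S_h\bv-\bv) + \bv$ and use the triangle inequality together with~\eqref{eqn:ShvDiff}:
\[
\sum_{e\in\calE_h^B} h_e^{-1}\|S_h\bv\|_{L^2(e)}^2 \le 2\sum_{e\in\calE_h^B} h_e^{-1}\|S_h\bv-\bv\|_{L^2(e)}^2 + 2\sum_{e\in\calE_h^B} h_e^{-1}\|\bv\|_{L^2(e)}^2 \le 2Cc_\delta\|\nab\bv\|_{L^2(\Omega_h)}^2 + 2\|\bv\|_{1,h}^2 \le C\|\bv\|_{1,h}^2,
\]
since $\|\nab\bv\|_{L^2(\Omega_h)}\le\|\bv\|_{1,h}$ by definition. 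This immediately gives $\|\bv\|_h^2 \le C\|\bv\|_{1,h}^2$. For the reverse direction, I would again use $\bv = (\bv - S_h\bv) + S_h\bv$ to get $h_e^{-1}\|\bv\|_{L^2(e)}^2 \le 2h_e^{-1}\|S_h\bv-\bv\|_{L^2(e)}^2 + 2h_e^{-1}\|S_h\bv\|_{L^2(e)}^2$, and summing and applying~\eqref{eqn:ShvDiff} once more yields $\|\bv\|_{1,h}^2 \le 2Cc_\delta\|\nab\bv\|_{L^2(\Omega_h)}^2 + 2\|\bv\|_h^2 \le C\|\bv\|_h^2$, using that $\|\nab\bv\|_{L^2(\Omega_h)}\le\|\bv\|_h$. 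Hence $\|\cdot\|_h$ and $\|\cdot\|_{1,h}$ are equivalent on $\bV_h$.

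Finally, $\tbar{\bv}_h^2 = \|\bv\|_h^2 + \sum_{e} h_e\|\nab\bv\|_{L^2(e)}^2$, and the extra term is controlled by $\|\nab\bv\|_{L^2(\Omega_h)}^2 \le \|\bv\|_h^2$ via the same trace--inverse estimate $h_e\|\nab\bv\|_{L^2(e)}^2 \lesssim \|\nab\bv\|_{L^2(K)}^2$ used above, so $\|\bv\|_h \le \tbar{\bv}_h \le C\|\bv\|_h$. The main obstacle in the argument is the discrete inverse estimate handling the second-derivative term $\|\nab^2\bv\|_{L^2(e)}$: one must be careful that $\bv$ is only piecewise $\bpol_2$ with respect to $\mct^{ct}$, so the inverse inequality and the conversion between edge norms and element norms should be applied on each sub-triangle of the Clough--Tocher split, invoking shape-regularity of $\mct^{ct}$ (which is inherited from quasi-uniformity of $\calS_h$). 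Everything else is a routine application of Assumption~\eqref{eqn:Assumption} together with the triangle inequality.
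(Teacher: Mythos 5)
Your argument is correct and follows essentially the same route as the paper: the paper's key estimate is exactly your combination of $\delta\le c_\delta h_e$ with trace and inverse inequalities applied to the Taylor-remainder terms $\delta^j\,\p^j\bv/\p\bd^j$ (stated there per macro element $T_e\in\calT_h$ rather than per Clough--Tocher sub-triangle, which is immaterial), and the equivalence of $\|\cdot\|_h$, $\|\cdot\|_{1,h}$, and $\tbar{\cdot}_h$ is then deduced by the same triangle-inequality manipulations you use.
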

\begin{proof}
By trace and inverse inequalities, the shape-regularity of $\mct$ and \eqref{eqn:Assumption}, there holds for $e\in \calE_h^B$,
\begin{align}\label{eqn:IT}
h_e^{-1} \int_e |\delta|^{2j} \big|\frac{\p^j \bv}{\p \bd^j}\big|^2\, ds\le C \delta_e^{2j} h_e^{-2j}\|\nab \bv\|_{L^2(T_e)}^2
\le C c_\delta^{2j}  \|\nab \bv\|_{L^2(T_e)}^2 \quad j=1,2,
\end{align}
where $T_e\in \calT_h$ satisfies $e\subset \p T$. It then follows from the definition of $S_h$ and
$\|\cdot\|_{1,h}$ that
\begin{align*}
\sum_{e\in \calE_h^B} h_e^{-1} \|S_h \bv\|_{L^2(e)}^2
&\le C \sum_{e\in \calE_h^B} \sum_{j=0}^2 h_e^{-1} \int_e |\delta|^{2j} \big|\frac{\p^j \bv}{\p \bd^j}\big|^2\, ds\le C \|\bv\|_{1,h}^2.
\end{align*}
This inequality immediately yields $\|\bv\|_h\le C \|\bv\|_{1,h}$.
Moreover, standard arguments involving the trace and inverse inequalities
show $\|\bv\|_h\le \tbar{\bv}_h\le C\|\bv\|_h$ on $\bV_h$.
Thus, to complete the proof,
it suffices to show that $\|\bv\|_{1,h}\le C\|\bv\|_h$.

To this end, we once again use \eqref{eqn:IT} to obtain
\begin{align*}
\sum_{e\in \calE_h^B} h_e^{-1} \|\bv\|_{L^2(e)}^2
&\le 2\sum_{e\in \calE_h^B} h_e^{-1} \|S_h \bv\|_{L^2(e)}^2
+ 2\sum_{e\in \calE_h^B} h_e^{-1} \|S_h \bv-\bv\|_{L^2(e)}^2\\
&\le 2\sum_{e\in \calE_h^B} h_e^{-1} \|S_h \bv\|_{L^2(e)}^2
+ C\sum_{e\in \calE_h^B} h_e^{-1} \sum_{j=1}^2\int_e |\delta|^{2j} \big|\frac{\p^j \bv}{\p \bd^j}\big|^2\, ds\\
&\le  2\sum_{e\in \calE_h^B} h_e^{-1} \|S_h \bv\|_{L^2(e)}^2+C \|\nab \bv\|_{L^2(\Omega_h)}^2.
\end{align*}
This inequality implies $\|\bv\|_{1,h}\le C\|\bv\|_h$.

\end{proof}

\subsection{Continuity and coercivity estimates of bilinear forms}
\begin{lemma}\label{lem:bhPert}
There holds
\begin{alignat}{2}
\label{eqn:continuity}
|a_h(\bv,\bw)|&\le c_2(1+\sigma) \nu \tbar{\bv}_{h} \tbar{\bw}_{h}\qquad &&\forall \bv,\bw\in \bV_h+H^3(\Omega_h),\\
\label{eqn:bCont}
\big|b_h(\bv,(q,\mu))\big| &\le C \|\bv\|_{1,h} \|(q,\mu)\|\quad &&\forall (q,\mu)\in \mathring{Q}_h\times \mathring{X}_h,\\
\label{eqn:bhPert}
\big| b_h(\bv,(q,\mu)) - b_h^e(\bv,(q,\mu))\big|
&\le C c_\delta \|\bv\|_{1,h} \|(q,\mu)\|\qquad &&\forall \bv\in \bV_h,\ \forall (q,\mu)\in \mathring{Q}_h\times \mathring{X}_h.
\end{alignat}
\begin{proof}
The proof of the continuity estimate of \eqref{eqn:continuity}
is given in \cite[Proposition 1]{ACS20A} (with superficial modifications).
The continuity estimate  of $b_h(\cdot,\cdot)$ \eqref{eqn:bCont} follows directly
from the Cauchy-Schwarz inequality.

This third estimate \eqref{eqn:bhPert}  follows from the definition of the forms,
the Cauchy-Schwarz inequality, and \eqref{eqn:IT}:
\begin{align*}
\big|b_h(\bv,(q,\mu)) - b_h^e(\bv,(q,\mu))\big|
& = \Big|\sum_{e\in \calE_h^B} \int_e \big((\bv-S_h \bv)\cdot \bn_h\big) \mu\, ds\Big|\\
&\le C\Big(\sum_{e\in \calE_h^B} \sum_{j=1}^2  h_e^{-1} \int_e |\delta|^{2j} \big|\frac{\p^j \bv}{\p \bd^j}\big|^2\Big)^{1/2} \|\mu\|_{-1/2,h}\\
&\le C c_\delta \|\bv\|_{1,h} \|\mu\|_{-1/2,h}.
\end{align*}

\end{proof}
\end{lemma}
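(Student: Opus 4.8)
The plan is to prove the three bounds separately, in each case reducing everything to edge-by-edge Cauchy--Schwarz with the natural scaling weights $h_e^{\pm1/2}$, together with the trace/inverse estimate \eqref{eqn:IT} and the norm equivalences on $\bV_h$ furnished by Lemma \ref{lem:EquivNorms}.

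For the continuity of $a_h$ in \eqref{eqn:continuity} I would bound the four terms in the definition of $a_h$ one at a time. The bulk term is immediate: $\|\nab\bv\|_{L^2(\Omega_h)}\|\nab\bw\|_{L^2(\Omega_h)}\le\tbar{\bv}_h\tbar{\bw}_h$. For the coupling term $\int_{\p\Omega_h}\frac{\p\bv}{\p\bn_h}\cdot(S_h\bu)\,ds$, I split over boundary edges and apply Cauchy--Schwarz with weight $h_e^{1/2}$ on $\nab\bv$ and $h_e^{-1/2}$ on $S_h\bu$, obtaining $\bigl(\sum_e h_e\|\nab\bv\|_{L^2(e)}^2\bigr)^{1/2}\bigl(\sum_e h_e^{-1}\|S_h\bu\|_{L^2(e)}^2\bigr)^{1/2}$; the first factor is part of $\tbar{\bv}_h$, and the second is absorbed into $\tbar{\bu}_h$ using the second bound of Lemma \ref{lem:EquivNorms} together with the equivalence of $\|\cdot\|_{1,h}$, $\|\cdot\|_h$ and $\tbar{\cdot}_h$. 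The term $\int_{\p\Omega_h}\frac{\p\bu}{\p\bn_h}\cdot\bv\,ds$ is handled identically (recovering $\|\bv\|_{L^2(e)}$ from $\|S_h\bv\|_{L^2(e)}$ via the inequality $\|\bv\|_{1,h}\le C\|\bv\|_h$ of Lemma \ref{lem:EquivNorms}), and the penalty term is estimated with weight $h_e^{-1/2}$ on both factors, producing the $\sigma$-dependence. The one genuinely delicate point is that \eqref{eqn:continuity} is claimed on the larger space $\bV_h+H^3(\Omega_h)$, on which the inverse estimates behind Lemma \ref{lem:EquivNorms} are unavailable; on the smooth component one must instead use honest trace theorems to make sense of, and bound, the second-derivative boundary contribution appearing in $S_h$ — which is exactly why $H^3$-regularity is demanded. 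This is the only step I expect to be nontrivial, and I would simply adapt the argument of \cite[Proposition 1]{ACS20A}.

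The continuity estimate \eqref{eqn:bCont} for $b_h$ follows at once from Cauchy--Schwarz. The bulk term satisfies $\bigl|\int_{\Omega_h}(\Div\bv)q\,dx\bigr|\le\|\Div\bv\|_{L^2(\Omega_h)}\|q\|_{L^2(\Omega_h)}\le C\|\nab\bv\|_{L^2(\Omega_h)}\|q\|_{L^2(\Omega_h)}$, and the boundary term satisfies $\bigl|\sum_e\int_e(\bv\cdot\bn_h)\mu\,ds\bigr|\le\bigl(\sum_e h_e^{-1}\|\bv\|_{L^2(e)}^2\bigr)^{1/2}\bigl(\sum_e h_e\|\mu\|_{L^2(e)}^2\bigr)^{1/2}\le\|\bv\|_{1,h}\,\|\mu\|_{-1/2,h}$. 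Adding these and recalling $\|(q,\mu)\|=\|q\|_{L^2(\Omega_h)}+\|\mu\|_{-1/2,h}$ gives the claim.

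For the perturbation bound \eqref{eqn:bhPert} I note that $b_h$ and $b_h^e$ differ only in the boundary term, so $b_h(\bv,(q,\mu))-b_h^e(\bv,(q,\mu))=\sum_e\int_e\bigl((\bv-S_h\bv)\cdot\bn_h\bigr)\mu\,ds$; since $\bv-S_h\bv=-\delta\frac{\p\bv}{\p\bd}-\tfrac12\delta^2\frac{\p^2\bv}{\p\bd^2}$, edge-wise Cauchy--Schwarz with weights $h_e^{\pm1/2}$ bounds this by $\bigl(\sum_e h_e^{-1}\|\bv-S_h\bv\|_{L^2(e)}^2\bigr)^{1/2}\|\mu\|_{-1/2,h}$. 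Applying \eqref{eqn:IT} with $j=1,2$ and using $c_\delta<1$ controls the first factor by $Cc_\delta\|\nab\bv\|_{L^2(\Omega_h)}\le Cc_\delta\|\bv\|_{1,h}$, and $\|\mu\|_{-1/2,h}\le\|(q,\mu)\|$ then finishes the estimate. The small subtlety worth flagging is that one should invoke \eqref{eqn:IT} directly — it retains the full factor $\delta_e^{2j}h_e^{-2j}\le c_\delta^{2j}$ — rather than the coarser bound \eqref{eqn:ShvDiff}, so that the final constant scales like $c_\delta$ and not merely $c_\delta^{1/2}$.
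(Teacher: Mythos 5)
Your proposal is correct and follows essentially the same route as the paper: the bound \eqref{eqn:bCont} by direct Cauchy--Schwarz, the perturbation bound \eqref{eqn:bhPert} by writing the difference as the boundary term in $(\bv-S_h\bv)\cdot\bn_h$ and invoking \eqref{eqn:IT} with $j=1,2$ (so the constant scales like $c_\delta$, exactly as in the paper), and the continuity \eqref{eqn:continuity} ultimately deferred to \cite[Proposition 1]{ACS20A}, which is precisely what the paper does. Your added remarks on the $\bV_h+H^3(\Omega_h)$ subtlety and on preferring \eqref{eqn:IT} over \eqref{eqn:ShvDiff} are sound refinements of the same argument, not a different approach.
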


\begin{lemma}\label{lem:coercivity}
Suppose that Assumption \eqref{eqn:Assumption} is satisfied for $c_\delta$
sufficiently small.
Then there exists $\sigma_0>0$ such that, for $\sigma\ge \sigma_0$,
\[
c_1{\nu} \|\bv\|^2_{1,h} \le a_h(\bv,\bv)\qquad \forall \bv\in \bV_h
\]
for $c_1>0$ independent of $h$ and $\nu$.
If the extension direction $\bd$ coincides with the outward unit normal
of $\p\Omega_h$, i.e., if $\bd = \bn_h$, then coercivity is satisfied
for any positive penalty parameter $\sigma>0$.
\end{lemma}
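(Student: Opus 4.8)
The plan is to establish the coercivity estimate $c_1\nu\|\bv\|_{1,h}^2\le a_h(\bv,\bv)$ by a direct expansion of $a_h(\bv,\bv)$, treating the symmetric piece and the problematic anti-symmetric Nitsche term separately, and then absorbing lower-order contributions using Lemma \ref{lem:EquivNorms} together with Assumption \eqref{eqn:Assumption}. First I would write out
\[
\nu^{-1}a_h(\bv,\bv) = \|\nab\bv\|_{L^2(\Omega_h)}^2 - \int_{\p\Omega_h}\frac{\p\bv}{\p\bn_h}\cdot\bv\,ds + \int_{\p\Omega_h}\frac{\p\bv}{\p\bn_h}\cdot(S_h\bv)\,ds + \sum_{e\in\calE_h^B}\frac{\sigma}{h_e}\|S_h\bv\|_{L^2(e)}^2,
\]
and combine the two boundary integrals into $\int_{\p\Omega_h}\frac{\p\bv}{\p\bn_h}\cdot(S_h\bv-\bv)\,ds$. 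The key point is that $S_h\bv-\bv$ is controlled by \eqref{eqn:ShvDiff}: its scaled $L^2(e)$-norm squared is bounded by $Cc_\delta\|\nab\bv\|_{L^2(\Omega_h)}^2$. Using Cauchy–Schwarz on each edge together with the trace/inverse inequality $h_e^{1/2}\|\p\bv/\p\bn_h\|_{L^2(e)}\le C\|\nab\bv\|_{L^2(T_e)}$, the combined boundary term is bounded in absolute value by
\[
\Big(\sum_{e}h_e\|\tfrac{\p\bv}{\p\bn_h}\|_{L^2(e)}^2\Big)^{1/2}\Big(\sum_{e}h_e^{-1}\|S_h\bv-\bv\|_{L^2(e)}^2\Big)^{1/2}\le C c_\delta^{1/2}\|\nab\bv\|_{L^2(\Omega_h)}^2,
\]
so that $\nu^{-1}a_h(\bv,\bv)\ge (1-Cc_\delta^{1/2})\|\nab\bv\|_{L^2(\Omega_h)}^2 + \sigma\sum_e h_e^{-1}\|S_h\bv\|_{L^2(e)}^2$. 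Choosing $c_\delta$ small enough that $1-Cc_\delta^{1/2}\ge \tfrac12$, and then using the norm equivalence $\|\bv\|_{1,h}\le C\|\bv\|_h$ from Lemma \ref{lem:EquivNorms} (which bounds $\sum_e h_e^{-1}\|\bv\|_{L^2(e)}^2$ by $\|\nab\bv\|_{L^2(\Omega_h)}^2$ plus $\sum_e h_e^{-1}\|S_h\bv\|_{L^2(e)}^2$), one concludes $\nu^{-1}a_h(\bv,\bv)\ge c_1\|\bv\|_{1,h}^2$ for $\sigma$ at or above a threshold $\sigma_0$ that in fact can be taken arbitrarily small — but I will keep the statement's $\sigma\ge\sigma_0$ since the constant $c_1$ degrades as $\sigma\to 0$.

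For the special case $\bd=\bn_h$, the anti-symmetric term becomes, after expanding $S_h\bv$,
\[
\int_{\p\Omega_h}\frac{\p\bv}{\p\bn_h}\cdot(S_h\bv)\,ds = \int_{\p\Omega_h}\frac{\p\bv}{\p\bn_h}\cdot\bv\,ds + \int_{\p\Omega_h}\delta\,\big|\tfrac{\p\bv}{\p\bn_h}\big|^2\,ds + \tfrac12\int_{\p\Omega_h}\delta^2\,\frac{\p\bv}{\p\bn_h}\cdot\frac{\p^2\bv}{\p\bn_h^2}\,ds,
\]
so the first-order term in $S_h\bv-\bv$ cancels the $-\int_{\p\Omega_h}\frac{\p\bv}{\p\bn_h}\cdot\bv$ and contributes a \emph{nonnegative} quantity $\int_{\p\Omega_h}\delta|\p\bv/\p\bn_h|^2\,ds\ge 0$ (here $\delta\ge 0$); only the second-order term $\tfrac12\int\delta^2\frac{\p\bv}{\p\bn_h}\cdot\frac{\p^2\bv}{\p\bn_h^2}$ needs to be absorbed, and by \eqref{eqn:IT} with $j=2$ its scaled contribution is $O(c_\delta^2)\|\nab\bv\|_{L^2(\Omega_h)}^2$, which is dominated by $\|\nab\bv\|_{L^2(\Omega_h)}^2$ for $c_\delta$ small regardless of $\sigma$. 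Hence in this case no lower bound on $\sigma$ is needed, and one gets coercivity for every $\sigma>0$.

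The main obstacle is the sign-indefinite coupling between $\p\bv/\p\bn_h$ and $S_h\bv-\bv$ at the boundary in the general case: this is precisely why the non-symmetric Nitsche form is used, since with the symmetric form one would need to also bound $\int_{\p\Omega_h}\frac{\p\bv}{\p\bn_h}\cdot\bv\,ds$ — which carries an arbitrary sign — against $\sigma\sum_e h_e^{-1}\|S_h\bv\|_{L^2(e)}^2$ and would thereby force a genuine lower bound on $\sigma$. In the non-symmetric form the only uncontrolled quantity is $\|S_h\bv-\bv\|_{L^2(e)}$, which \eqref{eqn:ShvDiff} makes small relative to $\|\nab\bv\|_{L^2(\Omega_h)}^2$ with a constant proportional to $c_\delta$, so the geometric smallness hypothesis does all the work. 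A minor technical point to handle carefully is that $\bv$ ranges over $\bV_h$ (not $\bV_h\cap\bH^1_0$), so $\bv$ need not vanish on $\p\Omega_h$ and the boundary terms are genuinely present; but this is already accounted for by working with $\|\bv\|_{1,h}$ rather than $\|\nab\bv\|_{L^2(\Omega_h)}$, and the norm equivalence of Lemma \ref{lem:EquivNorms} closes the gap.
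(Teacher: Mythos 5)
Your proposal is correct, and for the case $\bd=\bn_h$ it is essentially identical to the paper's argument: combine the two Nitsche terms into $\int_{\p\Omega_h}\frac{\p\bv}{\p\bn_h}\cdot(S_h\bv-\bv)\,ds$, observe that the first-order piece $\int_{\p\Omega_h}\delta\,|\p\bv/\p\bn_h|^2\,ds$ is nonnegative, and absorb the second-order piece, which is $O(c_\delta^2)\|\nab\bv\|_{L^2(\Omega_h)}^2$ by \eqref{eqn:IT} with $j=2$. Where you genuinely differ is the first assertion: the paper does not prove it in-house but defers to \cite[Theorem 2]{ACS20A} and \cite[Lemma 6]{BDT72}, whereas you give a short self-contained argument via Cauchy--Schwarz, the discrete trace/inverse bound $\sum_e h_e\|\p\bv/\p\bn_h\|_{L^2(e)}^2\le C\|\nab\bv\|_{L^2(\Omega_h)}^2$, the estimate \eqref{eqn:ShvDiff}, and the norm equivalence of Lemma \ref{lem:EquivNorms}. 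This buys two things: the proof stays entirely within the paper's own estimates, and it shows that under the hypothesis of the lemma (Assumption \eqref{eqn:Assumption} with $c_\delta$ sufficiently small) the threshold $\sigma_0$ can in fact be taken arbitrarily small even for a general extension direction $\bd$, with coercivity constant behaving like $\min(1,\sigma)$ --- a slightly stronger conclusion than the stated one, and consistent with it since $c_1$ is only required to be independent of $h$ and $\nu$, not of $\sigma$. The cited references need $\sigma\ge\sigma_0$ in settings where the smallness of $c_\delta$ is not exploited in the same way (or where the symmetric Nitsche form is used), which is presumably why the paper keeps the two-tier statement. One minor remark: applying \eqref{eqn:IT} directly to the combined boundary term gives the sharper factor $Cc_\delta\|\nab\bv\|_{L^2(\Omega_h)}^2$ rather than your $Cc_\delta^{1/2}\|\nab\bv\|_{L^2(\Omega_h)}^2$ obtained by routing through \eqref{eqn:ShvDiff}; either version suffices for the absorption, so this is a matter of sharpness, not correctness.
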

\begin{proof}
The proof the result under
assumption \eqref{eqn:Assumption}
follows exactly from the arguments in \cite[Theorem 2]{ACS20A} (see also \cite[Lemma 6]{BDT72}),
so the proof is omitted.

If $\bd = \bn_h$, then by definition of the bilinear form $a_h(\cdot,\cdot)$,
\begin{align*}
a_h(\bv,\bv) 
&=\nu\Big( \|\nab \bv\|_{L^2(\Omega_h)}^2 
+\sum_{e\in \calE_h^B}\Big( \int_{e} \frac{\p \bv}{\p \bn_h}\cdot (S_h \bv-\bv)\, ds
+ \frac{\sigma}{h_e} \|S_h \bv\|_{L^2(e)}^2\Big)\Big)\\
& = \nu\Big( \|\nab \bv\|_{L^2(\Omega_h)}^2 
+\sum_{e\in \calE_h^B} \Big(\int_{e} \delta \big|\frac{\p \bv}{\p \bn_h}\big|^2\, ds
+ \frac12 \int_{e} \delta^2 \frac{\p \bv}{\p \bn_h} \frac{\p^2 \bv}{\p \bn^2_h}\, ds
+ \frac{\sigma}{h_e} \|S_h \bv\|_{L^2(e)}^2\Big)\Big).
\end{align*}
We then use the Cauchy-Schwarz inequality, standard trace and inverse estimates,
and Assumption \eqref{eqn:Assumption} to get
\begin{align*}
\sum_{e\in \calE_h^B} \int_{e} \delta^2 \frac{\p \bv}{\p \bn_h} \frac{\p^2 \bv}{\p \bn^2_h}\, ds
&\le (\max_{e\in \calE_h^B} h_e^{-1} \delta_e\big)^2\Big(\sum_{e\in \calE_h^B} h_e \|\nab \bv\|_{L^2(e)}^2\Big)^{1/2}  
\Big(\sum_{e\in \calE_h^B} h^{3}_e \|D^2 \bv\|_{L^2(e)}^2\Big)^{1/2}\\
&\le C c_\delta^2 \|\nab \bv\|_{L^2(\Omega_h)}^2.
\end{align*}

Thus, we find
\begin{align*}
a_h(\bv,\bv) 
& \ge \nu\Big( (1-C c_\delta^2)\|\nab \bv\|_{L^2(\Omega_h)}^2 
+ \frac{\sigma}{h_e} \|S_h \bv\|_{L^2(e)}^2\Big)\Big) \ge C \nu \|\bv\|^2_h\ge C \nu \|\bv\|_{1,h}^2
\end{align*}
for $c_\delta$ sufficiently small and for $\sigma>0$.

\end{proof}

\subsection{Inf-Sup Stability I}

In this section we prove
the discrete inf-sup (LBB) condition for the Stokes pair
$\mathring{\bV}_h\times \mathring{Q}_h$ 
with stability constants independent of $h$.
In the case of a fixed polygonal domain,
the LBB stability for this pair is well-known (cf.~\cite{ArnoldQin92,QinThesis94,GuzmanNeilan18});
however, the extension of these results to the unfitted domain $\Omega_h$ is not immediate.
In particular, the proofs in \cite{ArnoldQin92,QinThesis94,GuzmanNeilan18} (directly or indirectly)
rely on the Ne\v{c}as inequality:
\[
\mc_h \|q\|_{L^2(\Omega_h)}\le \sup_{\bv\in \bH^1_0(\Omega_h)\backslash \{0\}} \frac{\int_{\Omega_h} (\Div \bv)q\, dx}{\|\nab \bv\|_{L^2(\Omega_h)}}\qquad \forall q\in L^2_0(\Omega_h)
\]
for some $\mc_h>0$ depending on the domain $\Omega_h$.
As explained in  \cite{GuzmanMaxim18}, it is unclear if the constant $\mc_h$ in this inequality
is independent of $h$.

Our approach is to simply combine the local 
stability of the Scott-Vogelius 
pair with the stability of the $\bpol_2\times \pol_0$ pair.
For a (macro) element $T\in \calT_h$, we define the local
spaces with boundary conditions
\begin{align*}
\bV_0(T) &= \{\bv\in \bH^1_0(T):\ \bv|_K\in \bpol_2(K)\ \forall K\subset T,\ K\in \calT_h^{ct}\},\\
Q_0(T) & = \{q\in L^2_0(T):\ q|_K\in \pol_1(K)\ \forall K\subset T,\ K\in \calT_h^{ct}\}.
\end{align*}
We state a local surjectivity
of the divergence operator acting on these
spaces.  The proof is found in, e.g., \cite{GuzmanNeilan18}.
\begin{lemma}\label{lem:LocalInfSup}
For every $q\in Q_0(T)$, there exists 
$\bv\in \bV_0(T)$ such that $\Div \bv = q$
and $\|\nab \bv\|_{L^2(T)}\le \beta^{-1}_T \|q\|_{L^2(T)}$.
Here, the constant $\beta_T>0$ depends only
on the shape-regularity of $T$.
\end{lemma}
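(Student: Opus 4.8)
The plan is to reduce the statement to a single fixed reference configuration and then to exploit the finite dimensionality of the local spaces. First I would fix a reference triangle $\hat T$ endowed with its Clough--Tocher split, write $F(\hat x)=B\hat x+b$ for the affine map with $F(\hat T)=T$, and transport the problem to $\hat T$ using the contravariant Piola transform associated with the (constant) matrix $B$: set $\bv(x)=\tfrac1{\det B}B\,\hat\bv(F^{-1}x)$ and, for pressures, $q(x)=\tfrac1{\det B}\hat q(F^{-1}x)$. Since $F$ carries the split of $\hat T$ onto that of $T$ and the Piola transform here is just a constant matrix composed with an affine homeomorphism, one checks that $\bv\in\bV_0(T)$ iff $\hat\bv\in\bV_0(\hat T)$, that $q\in Q_0(T)$ iff $\hat q\in Q_0(\hat T)$ (the zero-mean condition is preserved), and that $\Div\bv=q$ iff $\widehat{\Div}\hat\bv=\hat q$. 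The chain rule together with the shape regularity of $T$ then gives $\|\nab\bv\|_{L^2(T)}\le C\|\hat\nab\hat\bv\|_{L^2(\hat T)}$ and $\|\hat q\|_{L^2(\hat T)}\le C\|q\|_{L^2(T)}$ with $C$ depending only on the shape-regularity constant, so it suffices to prove the assertion on $\hat T$ with a fixed constant $\beta_{\hat T}>0$.

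On the reference macro-element I would first note that $\widehat{\Div}$ really does map $\bV_0(\hat T)$ into $Q_0(\hat T)$: the divergence of a piecewise-quadratic field is piecewise linear, and if the field vanishes on $\p\hat T$ then $\int_{\hat T}\widehat{\Div}\hat\bv=\int_{\p\hat T}\hat\bv\cdot\bn=0$. Next I would count degrees of freedom and observe that $\dim\bV_0(\hat T)=\dim Q_0(\hat T)$ (both equal $8$: continuous piecewise quadratics on the split carry one value per vertex and one per edge of the refined mesh, i.e.\ $10$ scalar degrees of freedom, of which $6$ are killed by the homogeneous boundary condition, leaving $4$ per component and hence $8$ for the vector-valued space; discontinuous piecewise linears on the three sub-triangles carry $9$ degrees of freedom, of which one is removed by the zero-mean condition). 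Consequently $\widehat{\Div}:\bV_0(\hat T)\to Q_0(\hat T)$ is onto as soon as it is one-to-one, and, the spaces being finite dimensional, surjectivity automatically furnishes a bounded linear right inverse whose norm we may denote $\beta_{\hat T}^{-1}$.

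It then remains to prove injectivity, for which I would use the two-dimensional stream-function representation. If $\hat\bv\in\bV_0(\hat T)$ is divergence free then, $\hat T$ being simply connected, $\hat\bv=\bcurl\psi$ for some scalar $\psi$; continuity of $\hat\bv$ forces $\nab\psi$ to be continuous, so $\psi$ is a $C^1$ piecewise cubic on the Clough--Tocher split, i.e.\ an element of the Hsieh--Clough--Tocher space (recall that $C^1$ continuity across the split automatically yields $C^2$ continuity at the split point). The condition $\hat\bv=0$ on $\p\hat T$ means $\nab\psi=0$ there, so after normalizing the additive constant, $\psi$ and $\nab\psi$ vanish at the three vertices of $\hat T$ and $\p_{\bn}\psi$ vanishes at the midpoints of its three edges; unisolvence of the HCT element then forces $\psi\equiv 0$, hence $\hat\bv\equiv 0$. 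Transporting the reference estimate back to $T$ via the first step completes the argument, with $\beta_T$ depending only on the shape regularity of $T$.

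I expect the injectivity step to be the main obstacle, since it is the only place where the Clough--Tocher refinement is genuinely exploited: one must pin down the correct finite-element space for the stream function and invoke its unisolvence. One could instead bypass the dimension count and verify surjectivity of $\widehat{\Div}$ directly, but this reduces to essentially the same local computation, which is carried out in \cite{GuzmanNeilan18}.
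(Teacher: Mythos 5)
Your proposal is correct. Note, however, that the paper does not prove this lemma at all: it simply cites \cite{GuzmanNeilan18}, where the surjectivity of the divergence from the Lagrange space onto discontinuous piecewise polynomials on barycentric (Alfeld) splits is established in arbitrary dimension by a direct construction of a preimage. Your argument is instead the classical two-dimensional one going back to \cite{ArnoldQin92}: affine/Piola scaling to a fixed reference macro-triangle (the Clough--Tocher split is preserved because affine maps send barycenters to barycenters, and the constant picked up is controlled by $\|B\|\,\|B^{-1}\|$, hence by shape regularity), the observation that $\Div$ maps $\bV_0(\hat T)$ into $Q_0(\hat T)$, the dimension count $8=8$, and injectivity on the divergence-free kernel via the stream function: a continuous, piecewise quadratic, divergence-free field on the split is $\bcurl\psi$ with $\psi$ in the Hsieh--Clough--Tocher space, and the vanishing of $\bv$ on $\p\hat T$ annihilates all HCT degrees of freedom (after fixing the additive constant, since $\nabla\psi=0$ on the connected boundary makes $\psi$ constant there), so $\psi\equiv 0$. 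All the individual steps check out, including the dimension count and the fact that the piecewise stream functions can be glued into a single $C^1$ function (the constants can be adjusted consistently around the interior vertex). What your route buys is a self-contained, elementary proof tailored to $d=2$; what the paper's citation buys is a statement valid in all dimensions, which matters for the authors' remark that Lemma \ref{lem:LBB} extends mutatis mutandis to 3D, where the stream-function/HCT argument does not carry over directly.
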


Next, we state the recent stability result of the $\bpol_2\times \pol_0$
pair on unfitted domains (cf.~\cite[Theorem 1, Section 6.3, and Remark 1]{GuzmanMaxim18}).
\begin{lemma}\label{lem:P2P0Stability}
Define the space of piecewise constants
with respect to the mesh $\calT_h$:
\[
\mathring{Y}_h = \{q\in L^2_0(\Omega_h):\ q|_T\in \pol_0(T)\ \forall T\in \calT_h\}\subset \mathring{Q}_h.
\]
There exists $\beta_0>0$ and $h_0>0$
such that for $h\le h_0$, there holds
\[
\sup_{\bv\in \mathring{\bV}_h\backslash \{0\}} \frac{\int_{\Omega_h} (\Div \bv)q\, dx}{\|\nab \bv\|_{L^2(\Omega_h)}} \ge \beta_0 \|q\|_{L^2(\Omega_h)}\qquad
\forall q\in \mathring{Y}_h.
\]
\end{lemma}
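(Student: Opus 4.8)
The plan is to prove the inf-sup stability of the $\bpol_2 \times \pol_0$ pair on the unfitted domain $\Omega_h$ by invoking the abstract framework of \cite{GuzmanMaxim18}, which was developed precisely for Stokes pairs on this kind of $h$-dependent computational domain. The essential obstacle, as emphasized in the text preceding the lemma, is that the natural proof via the Ne\v{c}as inequality requires a domain-dependent constant $\mc_h$ whose uniformity in $h$ is unclear when $\Omega_h$ changes with the mesh. The framework of \cite{GuzmanMaxim18} circumvents this by exploiting the macro-element structure of $\calT_h$ rather than a global decomposition into star-shaped pieces. Thus the core of my approach is to verify that the present setting satisfies the hypotheses of that framework and then apply its conclusion directly.

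First I would recall the macro-element structure of the computational mesh: by construction $\mct$ consists of full background triangles contained in $\bar\Omega$, and by the Remark following the definition of $\calT_h^{ct}$, the Clough-Tocher refinement $\calT_h^{ct}$ inherits this macro-element structure, with each macro-element being a single $T\in\calT_h$ split at its barycenter. The key structural input required by \cite{GuzmanMaxim18} is a local inf-sup (or local surjectivity) result for the velocity space against the target pressure space on each macro patch, together with a quasi-uniformity/shape-regularity assumption on $\calS_h$ (hence on $\mct$), both of which hold here. For the piecewise-constant target space $\mathring{Y}_h$, the relevant local property is that on each macro-element the full $\bpol_2$ velocities with vanishing boundary trace can reproduce the mean-zero constant pressures; this is exactly the weaker local statement underlying Lemma \ref{lem:LocalInfSup}, and it supplies the ingredient \cite{GuzmanMaxim18} needs.

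The central step is then to feed these verified hypotheses into the main theorem of \cite{GuzmanMaxim18}. That result produces, for all sufficiently small $h\le h_0$, a constant $\beta_0>0$ independent of $h$ such that the global inf-sup condition over $\mathring{\bV}_h$ holds against every $q\in\mathring{Y}_h$. The mechanism is a partition of unity argument over overlapping macro-element patches combined with a uniform handling of the boundary layer where $\Omega_h$ meets $\p\Omega$; the boundary elements, being full triangles of the quasi-uniform background mesh, avoid the degeneracy that would otherwise spoil uniformity. Because this is precisely the setting for which \cite{GuzmanMaxim18} was designed (their Theorem 1, Section 6.3, and Remark 1, as cited), the conclusion transfers verbatim.

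The hardest part will be confirming that our computational mesh genuinely meets the structural assumptions of \cite{GuzmanMaxim18}---in particular that the collection of macro-elements $\{T\in\calT_h\}$ forms an admissible overlapping cover with uniformly bounded overlap and that the boundary configuration of $\Omega_h$ is compatible with their construction. Given the explicit Remark asserting that $\calT_h^{ct}$ inherits the needed macro-element structure, and the quasi-uniformity of $\calS_h$, this verification is expected to be routine rather than delicate, so the lemma follows essentially by citation once the hypotheses are checked.
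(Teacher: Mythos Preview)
Your proposal is correct and matches the paper's approach: the paper does not supply an independent proof of this lemma but simply states it as a direct consequence of \cite[Theorem~1, Section~6.3, and Remark~1]{GuzmanMaxim18}, exactly as you propose. Your elaboration on which hypotheses need checking (shape-regularity, the unfitted mesh consisting of full background triangles, local stability of the $\bpol_2\times\pol_0$ pair) is reasonable, though note that the Clough--Tocher macro structure is not itself what \cite{GuzmanMaxim18} requires---the $\bpol_2\times\pol_0$ pair is already locally stable on any shape-regular triangle, and the Clough--Tocher structure enters only later in Lemma~\ref{lem:LBB} when lifting to the full Scott--Vogelius pressure space.
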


Combining Lemmas \ref{lem:LocalInfSup}--\ref{lem:P2P0Stability}
yields the following stability result for the $\mathring{\bV}_h\times \mathring{Q}_h$
Stokes pair.
\begin{lemma}\label{lem:LBB}
There exists $\beta_1>0$ independent of $h$ such that
\[
\sup_{\bv\in \mathring{\bV}_h\backslash \{0\}} \frac{\int_{\Omega_h} (\Div \bv)q\, dx}{\|\nab \bv\|_{L^2(\Omega_h)}} \ge \beta_1 \|q\|_{L^2(\Omega_h)}\qquad
\forall q\in \mathring{Q}_h.
\]
for $h\le h_0$.
\end{lemma}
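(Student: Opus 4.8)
The plan is to prove the full inf-sup condition by decomposing an arbitrary $q\in\mathring{Q}_h$ into a piecewise-constant part and a local mean-zero remainder, then handling each piece with the two lemmas already in hand. Given $q\in\mathring{Q}_h$, let $\bar q\in\mathring{Y}_h$ be the $L^2$-projection of $q$ onto piecewise constants with respect to the macro mesh $\calT_h$; that is, $\bar q|_T$ is the average of $q$ over $T$ for each $T\in\calT_h$. Then $q-\bar q$ has mean zero on every macro element $T$, so $(q-\bar q)|_T\in Q_0(T)$ for each $T$, and by the orthogonality of the projection we have the Pythagorean identity $\|q\|_{L^2(\Omega_h)}^2=\|\bar q\|_{L^2(\Omega_h)}^2+\|q-\bar q\|_{L^2(\Omega_h)}^2$.

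First I would handle the remainder. For each macro element $T$, apply Lemma \ref{lem:LocalInfSup} to $(q-\bar q)|_T\in Q_0(T)$ to obtain $\bv_T\in\bV_0(T)$ with $\Div\bv_T=(q-\bar q)|_T$ and $\|\nab\bv_T\|_{L^2(T)}\le\beta_T^{-1}\|q-\bar q\|_{L^2(T)}$. Since each $\bv_T$ vanishes on $\p T$, extending by zero and summing produces $\bv_1\in\mathring{\bV}_h$ with $\Div\bv_1=q-\bar q$ on $\Omega_h$; using the uniform lower bound $\beta_*:=\inf_T\beta_T>0$ (which exists by the quasi-uniformity and shape-regularity of $\calS_h$, hence of $\calT_h$), we get $\|\nab\bv_1\|_{L^2(\Omega_h)}^2=\sum_T\|\nab\bv_T\|_{L^2(T)}^2\le\beta_*^{-2}\|q-\bar q\|_{L^2(\Omega_h)}^2$. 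Next, apply Lemma \ref{lem:P2P0Stability} to $\bar q\in\mathring{Y}_h$ to obtain $\bv_2\in\mathring{\bV}_h$ with $\int_{\Omega_h}(\Div\bv_2)\bar q\,dx\ge\beta_0\|\bar q\|_{L^2(\Omega_h)}\|\nab\bv_2\|_{L^2(\Omega_h)}$; after rescaling I may assume $\|\nab\bv_2\|_{L^2(\Omega_h)}=\|\bar q\|_{L^2(\Omega_h)}$ and $\int_{\Omega_h}(\Div\bv_2)\bar q\,dx\ge\beta_0\|\bar q\|_{L^2(\Omega_h)}^2$.

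The final step is to combine the two pieces with a parameter. Set $\bv=\bv_1+\rho\bv_2$ for a small constant $\rho>0$ to be chosen. Using $\int_{\Omega_h}(\Div\bv_1)\bar q\,dx=\|q-\bar q\|^2$ only implicitly—rather, compute directly with $q$: $\int_{\Omega_h}(\Div\bv)q\,dx=\int_{\Omega_h}(\Div\bv_1)q\,dx+\rho\int_{\Omega_h}(\Div\bv_2)q\,dx$. The first term equals $\int_{\Omega_h}(q-\bar q)q\,dx=\|q-\bar q\|^2$ (again by projection orthogonality, since $\int_{\Omega_h}(q-\bar q)\bar q\,dx=0$). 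For the second term, write $\int_{\Omega_h}(\Div\bv_2)q\,dx=\int_{\Omega_h}(\Div\bv_2)\bar q\,dx+\int_{\Omega_h}(\Div\bv_2)(q-\bar q)\,dx\ge\beta_0\|\bar q\|^2-\|\nab\bv_2\|\,\|q-\bar q\|\ge\beta_0\|\bar q\|^2-\|\bar q\|\,\|q-\bar q\|$. Putting these together and applying Young's inequality to the cross term $\rho\|\bar q\|\|q-\bar q\|$, one obtains $\int_{\Omega_h}(\Div\bv)q\,dx\ge(1-\tfrac{\rho}{2\eps})\|q-\bar q\|^2+\rho(\beta_0-\tfrac{\eps}{2})\|\bar q\|^2$; choosing $\eps=\beta_0$ and then $\rho$ small enough that $1-\rho/(2\beta_0)\ge\tfrac12$ gives $\int_{\Omega_h}(\Div\bv)q\,dx\ge c(\|q-\bar q\|^2+\|\bar q\|^2)=c\|q\|_{L^2(\Omega_h)}^2$. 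Finally, $\|\nab\bv\|_{L^2(\Omega_h)}\le\|\nab\bv_1\|+\rho\|\nab\bv_2\|\le(\beta_*^{-1}\|q-\bar q\|+\rho\|\bar q\|)\le C\|q\|_{L^2(\Omega_h)}$, and dividing yields the claim with $\beta_1=c/C$.

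The routine obstacles are the bookkeeping of the Young's inequality constants and confirming that the zero-extension of the local correctors $\bv_T$ indeed lands in $\mathring{\bV}_h$ (it does, since $\bV_0(T)\subset\bH^1_0(T)$ and the spaces are defined element-by-element on $\calT_h^{ct}$, and the global zero-mean constraint on $Q_h$ plays no role for velocity). The one genuine point requiring care—and the place I expect to lean hardest on the hypotheses—is the uniformity of $\beta_*=\inf_{T\in\calT_h}\beta_T$: this is where shape-regularity of $\calS_h$ (inherited by $\calT_h$) is essential, since Lemma \ref{lem:LocalInfSup} only asserts a $T$-dependent constant. Everything else is a standard two-level decomposition argument, and the restriction $h\le h_0$ is imported directly from Lemma \ref{lem:P2P0Stability}.
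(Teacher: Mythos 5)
Your proposal is correct and is essentially the paper's argument: the same splitting $q=\bar q+(q-\bar q)$ with $\bar q$ the macro-elementwise average, Lemma \ref{lem:LocalInfSup} for the local remainder and Lemma \ref{lem:P2P0Stability} for $\bar q$, differing only in the final combination step (you build the explicit test function $\bv=\bv_1+\rho\bv_2$ and use Young's inequality, whereas the paper chains the sup inequalities and finishes with the triangle inequality). One small slip to fix: in two dimensions one only has $\|\Div \bv_2\|_{L^2(\Omega_h)}\le \sqrt{2}\,\|\nab \bv_2\|_{L^2(\Omega_h)}$ (the paper itself carries this $\sqrt{2}$ in Theorem \ref{thm:MainIS}), so the cross term should read $\sqrt{2}\,\rho\|\bar q\|\,\|q-\bar q\|$; this only changes the constants in the Young step.
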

\begin{proof}
The proof essentially follows from Lemmas \ref{lem:LocalInfSup}--\ref{lem:P2P0Stability}
with the arguments in \cite{ArnoldQin92,QinThesis94,GuzmanNeilan18}.  We provide
the proof for completeness.

Let $q\in \mathring{Q}_h$, and let $\bar q\in \mathring{Y}_h$ be its piecewise average, i.e., $\bar q|_T = |T|^{-1}\int_T q\, dx$ for all $T\in \calT_h$.
We then have $(q-\bar q)|_T\in Q_0(T)$ for all $T\in \calT_h$,
and therefore, by Lemma \ref{lem:LocalInfSup}, there exists $\bv_{1,T}\in \bV_0(T)$
such that $\Div \bv_{1,T} = (q-\bar q)|_T$ and $\|\nab \bv\|_{L^2(T)}\le \beta_T^{-1} \|q\|_{L^2(T)}$.
Defining $\bv_1\in \mathring{\bV}_h$ by $\bv_1|_T = \bv_{1,T}\ \forall T\in \calT_h$,
we have $\Div \bv_1 = (q-\bar q)$ in $\Omega_h$ and $\|\nab \bv_1\|_{L^2(\Omega_h)}\le \beta_*^{-1}\|q-\bar q\|_{L^2(\Omega_h)}$,
where $\beta_* = \min_{T\in \calT_h} \beta_T$.

With this result, and by Lemma \ref{lem:P2P0Stability}, we conclude
\begin{align*}
\beta_0 \|\bar q\|_{L^2(\Omega_h)}
& \le \sup_{\bv\in \mathring{\bV}_h\backslash \{0\}} \frac{\int_{\Omega_h} (\Div \bv)\bar q\, dx}{\|\nab \bv\|_{L^2(\Omega_h)}}\\
& \le \sup_{\bv\in \mathring{\bV}_h\backslash \{0\}} \frac{\int_{\Omega_h} (\Div \bv) q\, dx}{\|\nab \bv\|_{L^2(\Omega_h)}} +\|q-\bar q\|_{L^2(\Omega_h)}\\
%
%
& \le (1+\beta_*^{-1}) \sup_{\bv\in \mathring{\bV}_h\backslash \{0\}} \frac{\int_{\Omega_h} (\Div \bv) q\, dx}{\|\nab \bv\|_{L^2(\Omega_h)}}.
\end{align*}
Thus,
\begin{align*}
\|q\|_{L^2(\Omega_h)}\le \|q-\bar q\|_{L^2(\Omega_h)}+\|\bar q\|_{L^2(\Omega_h)}\le 
\big(\beta^{-1}_*+\beta_0^{-1}(1+\beta^{-1}_*)\big)\sup_{\bv\in \mathring{\bV}_h\backslash \{0\}} \frac{\int_{\Omega_h} (\Div \bv) q\, dx}{\|\nab \bv\|_{L^2(\Omega_h)}}.
\end{align*}
Setting $\beta_1 = \big(\beta^{-1}_*+\beta_0^{-1}(1+\beta^{-1}_*)\big)^{-1}$ completes the proof.
\end{proof}

\subsection{Inf-Sup Stability II}
The following lemma proves
inf-sup stability for the Lagrange multiplier part
of the bilinear form $b_h(\cdot,\cdot)$.
\begin{lemma}\label{lem:LMStab}
There holds 
\begin{align}\label{eqn:LinearInfSup}
\sup_{\bv\in \bV_h\backslash \{0\}}\frac{\int_{\p\Omega_h} (\bv\cdot \bn)\mu\, ds}{\|\bv\|_{1,h}} \ge \beta_2 \|\mu\|_{-1/2,h}\qquad\forall \mu\in \mathring{X}_h. 
\end{align}
for some $\beta_2>0$ independent of $h$.
\end{lemma}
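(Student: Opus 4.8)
The plan is to construct, for a given $\mu \in \mathring{X}_h$, an explicit velocity field $\bv \in \bV_h$ whose normal trace on $\p\Omega_h$ is essentially $\mu$ (or a good approximation thereof), while controlling $\|\bv\|_{1,h}$ by $\|\mu\|_{-1/2,h}$. The natural candidate is to build $\bv$ locally, edge by edge along the boundary partition $\calE_h^B$, using a bubble-type function supported in a single layer of boundary elements $T_e \in \calT_h$ (or its Clough--Tocher children). On each boundary element, one takes $\bv$ to be $c_e\, \bn_h$ times a quadratic edge-bubble-like function that vanishes on the two non-boundary edges of $T_e$, so that $\bv$ is globally continuous and $H^1$-conforming, with the scalar weights $c_e$ chosen so that $\int_e (\bv \cdot \bn_h)\,\psi\, ds$ reproduces the right pairing against the piecewise-quadratic $\mu$. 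Since $\mu|_e \in \pol_2(e)$ and the velocity space contains piecewise quadratics on the Clough--Tocher refinement, one has enough local degrees of freedom on each boundary edge to match $\mu$ in an $L^2(e)$-duality sense; a standard scaling/reference-element argument gives $\int_e (\bv\cdot\bn_h)\mu\, ds \gtrsim \|\mu\|_{L^2(e)}^2$ with the appropriate $h_e$ powers.

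The one global constraint to be careful about is that members of $\bV_h$ must satisfy $\int_{\p\Omega_h}(\bv\cdot \bn_h)\, ds = 0$. This is exactly why the Lagrange multiplier space is taken to be $\mathring{X}_h$ with zero mean: for $\mu \in \mathring{X}_h$ the naive edgewise construction already (nearly) satisfies the mean-zero constraint, and any residual can be corrected by subtracting a fixed $\bv_0 \in \bV_h$ with $\int_{\p\Omega_h}(\bv_0\cdot\bn_h)\,ds \neq 0$, scaled to fix the defect; because $\int_{\p\Omega_h}(\cdot\,\bn_h)\,ds$ annihilates constants on $\mathring X_h$, this correction only changes the pairing $\int_{\p\Omega_h}(\bv\cdot\bn_h)\mu\,ds$ by a controlled amount (zero against the constant part of $\mu$, which is absent). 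So after this adjustment we keep $\bv\in\bV_h$, $\int_{\p\Omega_h}(\bv\cdot\bn_h)\mu\,ds \gtrsim \|\mu\|_{-1/2,h}^2$, and $\|\bv\|_{1,h}\lesssim \|\mu\|_{-1/2,h}$.

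The estimate $\|\bv\|_{1,h}\lesssim \|\mu\|_{-1/2,h}$ comes from locality: each local piece $\bv_e$ lives on $T_e$ with $\|\nab \bv_e\|_{L^2(T_e)}^2 \lesssim h_e^{-2}\|\bv_e\|_{L^2(T_e)}^2 \lesssim h_e^{-1}\|\bv_e\|_{L^2(e)}^2 \lesssim h_e^{-1}\cdot h_e^2 c_e^2 \lesssim h_e\|\mu\|_{L^2(e)}^2$ by inverse and trace inequalities on the shape-regular reference element, and similarly the boundary term $\sum_e h_e^{-1}\|\bv\|_{L^2(e)}^2 \lesssim \sum_e h_e\|\mu\|_{L^2(e)}^2 = \|\mu\|_{-1/2,h}^2$; summing over $e \in \calE_h^B$, using finite overlap of the supports $T_e$, gives $\|\bv\|_{1,h}^2 \lesssim \|\mu\|_{-1/2,h}^2$. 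Dividing then yields \eqref{eqn:LinearInfSup} with $\beta_2$ depending only on shape-regularity.

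The main obstacle I expect is not the local bubble construction itself but handling two bookkeeping subtleties simultaneously: (i) ensuring the local velocity pieces glue into a genuinely $H^1$-conforming, globally continuous function in $\bV_h$ — i.e. that the bubble on $T_e$ vanishes on all edges of $T_e$ except $e$, which requires checking it is compatible with the Clough--Tocher refinement and the continuity of $\bV_h$; and (ii) enforcing the mean-zero side constraint $\int_{\p\Omega_h}(\bv\cdot\bn_h)\,ds=0$ without destroying the lower bound on the $\mu$-pairing or inflating the norm. Both are routine in spirit but must be done carefully, since the geometry of $\p\Omega_h$ (a polygonal curve through interior vertices, not $\p\Omega$) and the macro-element structure are in play; the zero-mean restriction on $\mathring X_h$ is precisely the ingredient that makes (ii) go through cleanly.
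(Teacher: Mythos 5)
There is a genuine gap, and it is in the core of your construction rather than in the bookkeeping you flag. If the local piece on a boundary macro element $T_e$ is $c_e\,\bn_h$ times a quadratic (or piecewise quadratic on the Clough--Tocher split) that vanishes on the two non-boundary edges of $T_e$, then it vanishes at all three vertices of $T_e$, hence at both endpoints of $e$; its normal trace on $e$ is therefore a scalar multiple of the single edge bubble $b_e$. The pairing then reads $\int_e(\bv\cdot\bn_h)\mu\,ds=c_e\int_e b_e\,\mu\,ds$, and this cannot be bounded below by $\|\mu\|_{L^2(e)}^2$: since $\mu|_e$ ranges over all of $\pol_2(e)$, you can choose $\mu$ whose restriction to each edge is $L^2(e)$-orthogonal to $b_e$, and a dimension count (the bubbles give one functional per edge, while $\dim\mathring{X}_h=2N-1$ for $N$ boundary edges) shows there is a large subspace of $\mathring{X}_h$ on which your test functions produce zero pairing. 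So the claimed estimate $\int_e(\bv\cdot\bn_h)\mu\,ds\gtrsim\|\mu\|_{L^2(e)}^2$ fails, and no choice of the weights $c_e$ rescues the inf-sup bound over all of $\mathring{X}_h$. In short, to ``see'' a continuous piecewise quadratic multiplier you must use velocity degrees of freedom at the boundary vertices and edge midpoints, not only interior bubbles.

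Once you allow nonzero vertex values, the real difficulty appears, and it is the one your proposal does not address: at a corner vertex $a_j$ of the polygonal boundary $\p\Omega_h$ the test field must satisfy $(\bv\cdot\bn_j)(a_j)=h\mu(a_j)$ and $(\bv\cdot\bn_{j-1})(a_j)=h\mu(a_j)$ simultaneously, so $\bv(a_j)$ is reconstructed from its components along two different normals, and the naive bound on $|\bv(a_j)|$ degenerates like the reciprocal of $|\bt_j\cdot\bn_{j-1}|$ as the corner becomes nearly flat. The paper's proof prescribes exactly these degrees of freedom (so that $\bv\cdot\bn_h=h\mu$ edge by edge, which also makes the constraint $\int_{\p\Omega_h}\bv\cdot\bn_h\,ds=0$ automatic because $\mu$ has zero mean --- your separate correction by a fixed $\bv_0$ is unnecessary, and as written its effect on the pairing is not actually zero) and then proves the key claim that $|\bv(a_j)|\le Ch|\mu(a_j)|$ uniformly: the continuity of $\mu$ at $a_j$ forces the same datum on both adjacent edges, producing the cancellation that keeps $(\bt_j-\bt_{j-1})/(\bt_j\cdot\bn_{j-1})$ bounded for nearly flat corners. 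Without this vertex-level construction and the accompanying uniform bound, the argument does not close.
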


\begin{proof}
We label the boundary edges
as $\{e_j\}_{j=1}^N = \calE_h^B$,
and denote the boundary vertices by $\{a_j\}_{j=1}^N = \calV_h^B$, labeled
such that $e_j$ has vertices $a_j$ and $a_{j+1}$, with the convention that $a_{N+1}=a_1$.
Define the set of boundary edge midpoints $\calM_h^B = \{m_j\}_{j=1}^N$
with $m_j = \frac12(a_j+a_{j+1})$.
Let $\bn_j$ be the normal vector of $\p\Omega_h$ restricted to the edge $e_j$,
and let $\bt_j$ be the tangent vector obtained by rotating $\bn_j$
$90$ degrees clockwise.
Without loss of generality, we assume that $\bt|_{e_j}$
is parallel to $a_{j+1}-a_j$. 
We further denote by $\calV_h^C$ the set of boundary corner vertices, i.e.,
if $a_j\in \calV_h^C$, then the outward unit normals $\bn_j,\bn_{j-1}$
of the edges touching $a_j$ are linearly independent.
The set of flat boundary vertices are defined as $\calV_h^F = \calV_h^B\backslash \calV_h^C$.
Note that $\bn_j = \bn_{j-1}$ and $\bt_j = \bt_{j-1}$ for $a_j\in \calV_h^F$.

Given $\mu\in \mathring{X}_h$, we define $\bv\in \bV_h$ by the conditions
\begin{equation}\label{eqn:vnDOFs}
\begin{aligned}
&(\bv\cdot \bn_j)(a_j) = h \mu(a_j),\quad &&(\bv\cdot \bn_{j-1})(a_j) = h \mu(a_j)\qquad &&\forall a_j\in \calV_h^C,\\
&(\bv\cdot \bn_j)(a_j) = h\mu(a_j),\quad &&(\bv\cdot \bt_j)(a_j) =0\qquad &&\forall a_j\in \calV_h^F,\\
&(\bv\cdot \bn_{j})(m_{j}) = h\mu(a_{j}),\quad &&(\bv\cdot \bt_j)(m_j)=0\qquad &&\forall m_j\in \calM_h^B.
\end{aligned}
\end{equation}
All other (quadratic Lagrange) degrees of freedom of $\bv$ are set to zero, i.e.,
$\bv(a)=0$ at all interior vertices and interior edge midpoints in $\calT_h^{ct}$.

Since $(\bv\cdot \bn_{j} - h \mu)|_{e_j}$ is a quadratic 
on each $e_j\in \calE_h^B$, and $\bv\cdot \bn_{j}=h\mu$ at three distinct points on $e_j$, we have that $\bv\cdot \bn_{j}-h\mu|_{e_j}=0$.
{Moreover, using quasi uniformity, we have} 
\begin{align}\label{ineq0}
\int_{\p \Omega_h}(\bv\cdot \bn)\mu\, ds\ge C\|\mu\|^2_{-1/2,h}.
\end{align}
It remains to show that $\|\bv\|_{1,h}\le C \|\mu\|_{-1/2,h}$ to complete the proof.

For $K\in \calT_h^{ct}$, let $\calV_K^B, \calV_K^C,\calV_K^F,\calM_K^B$
be the sets of elements in $\calV_h^B, \calV_h^C,\calV_h^F,\calM_h^B$
contained in $\bar K$, respectively.
By a standard scaling argument and \eqref{eqn:vnDOFs}, we get ($m=0,1$) 
\begin{align}\label{eqn:bvStart1}
\|\bv\|_{H^m(K)}^2
&\le C \sum_{a_j\in \calV_K^B\cup \calM_K^B} h_{e_j}^{2-2m} |\bv(a_j)|^2\\
%
&\nonumber= C \Big(\sum_{a_j\in \calV_K^C} h_{e_j}^{2-2m} |\bv(a_j)|^2
+ \sum_{a_j\in \calV_K^F\cup \calM_K^B} h_{e_j}^{4-2m} |\mu(a_j)|^2\Big).
\end{align}

Claim: $|\bv(a_j)|\le Ch|\mu(a_j)|$ for all $a_j\in \calV_K^C$, where $C>0$ is 
uniformly bounded and independent of $h$, $\bn_j$ and $\bn_{j-1}$.

{\em Proof of the claim}:  Assume that $\calV_K^C$ is non-empty
for otherwise the proof is trivial.
For $a_j\in \calV_K^C$, we write $\bv(a_j)$ in terms
of the basis $\{\bt_j,\bt_{j-1}\}$, use \eqref{eqn:vnDOFs},
and apply some elementary vector identities:
\begin{align*}
\bv(a_j) 
&= \frac1{\bt_{j-1}\cdot \bn_j} (\bv\cdot \bn_j)(a_j)\bt_{j-1}
+ \frac1{\bt_j\cdot \bn_{j-1}} (\bv\cdot \bn_{j-1})(a_j)\bt_j\\
&= h \mu(a_j)\Big(\frac1{\bt_{j-1}\cdot \bn_j}\bt_{j-1}
+ \frac1{\bt_j\cdot \bn_{j-1}} \bt_j\Big)\\
&= h \mu(a_j)\Big(
 \frac{\bt_j- \bt_{j-1}}{\bt_j\cdot \bn_{j-1}} \Big).
\end{align*}
Write $\bt_j = (\cos(\theta_j),\sin(\theta_j))^\intercal$.
We then compute $\bt_j \cdot \bn_{j-1} = \sin(\theta_{j-1} -\theta_j)$, and therefore
\begin{align*}
 \frac{\bt_j- \bt_{j-1}}{\bt_j\cdot \bn_{j-1}} 
 & = \frac{(\cos(\theta_j)-\cos(\theta_{j-1}),\sin(\theta_j)-\sin(\theta_{j-1}))^\intercal}{\sin(\theta_{j-1}-\theta_j)}.
 \end{align*}
Since
\begin{align*}
\lim_{\theta_j\rightarrow\theta_{j-1}}\frac{(\cos{\theta_j}-\cos{\theta_{j-1}}, \sin{\theta_j}-\sin{\theta_{j-1}})^\intercal}{\sin{(\theta_{j-1}-\theta_j)}}
&=\lim_{\theta_j\rightarrow\theta_{j-1}}\frac{(-\sin{\theta_j}, \cos{\theta_j})^\intercal}{-\cos{(\theta_{j-1}-\theta_j)}}
&=(\sin{\theta_{j-1}},\cos{\theta_{j-1}})^\intercal,
\end{align*}
we conclude that $\big| \frac{\bt_j- \bt_{j-1}}{\bt_j\cdot \bn_{j-1}} \big|$ is bounded
for $|\bt_j\cdot \bn_{j-1}|\ll 1$, i.e., for ``nearly flat boundary vertices''.
This conclusion and the shape regularity of the mesh
shows $\big| \frac{\bt_j- \bt_{j-1}}{\bt_j\cdot \bn_{j-1}} \big|\le C$
for some $C>0$ independent of $h$ and $\{\bn_{j-1},\bn_j\}$.
This concludes the proof of the claim.

Applying the claim to \eqref{eqn:bvStart1} and a scaling argument yields
\begin{align*}
\|\bv\|_{H^m(K)}^2
&\nonumber\le C \sum_{a_j\in \calV_K^B\cup \calM_K^B} h_{e_j}^{4-2m} |\mu(a_j)|^2
\le C \mathop{\sum_{e\in \calE_h^B}}_{a_j\in \bar e:\ a_j\in \calV_K^B} h_e^{3-2m} \|\mu\|_{L^2(e)}^2.
\end{align*}
Therefore, by an inverse inequality and shape-regularity of $\calT_h^{ct}$,
\begin{align*}
\|\bv\|_{1,h}^2 
&= \|\nab \bv\|_{L^2(\Omega_h)}^2 + \sum_{e\in \calE_h^B} \frac1{h_e} \|\bv\|_{L^2(e)}^2\\
&\le C \|\mu\|_{-1/2,h}^2+C \sum_{K\in \calT_h^{ct}} h_K^{-2} \|\bv\|_{L^2(K)}^2\le C \|\mu\|_{-1/2,h}^2.
\end{align*}
Combining this estimate with \eqref{ineq0}
yields the desired inf-sup condition \eqref{eqn:LinearInfSup}.
\end{proof}

\begin{remark}
The proof of Lemma \ref{lem:LMStab}, and in particular
the proof of the claim, relies on the continuity properties
of the Lagrange multiplier space at nearly flat corner vertices.
\end{remark}

\subsection{Main Stability Estimates}

Combining Lemmas \ref{lem:LBB} and \ref{lem:LMStab}
yields inf-sup stability for the bilinear form $b_h(\cdot,\cdot)$.
We also show that this result implies inf-sup 
stability for the bilinear form with boundary correction $b_h^e(\cdot,\cdot)$.
\begin{theorem}\label{thm:MainIS}
Then there exists $\beta>0$ depending  only on $\beta_1$ and $\beta_2$ such that
\begin{align}\label{eqn:MainIS}
\beta \|(q,\mu)\|
\le \sup_{\bv\in \bV_h\backslash \{0\}} \frac{b_h(\bv,(q,\mu))}{\|\bv\|_{1,h}}\qquad \forall (q,\mu)\in \mathring{Q}_h\times \mathring{X}_h.
\end{align}
\end{theorem}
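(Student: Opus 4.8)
The plan is to establish \eqref{eqn:MainIS} by a standard combination of the two inf-sup results already proved (Lemma \ref{lem:LBB} and Lemma \ref{lem:LMStab}), using a parametrized linear combination of test functions to ``glue'' the velocity-pressure estimate and the velocity-Lagrange multiplier estimate. First I would fix $(q,\mu)\in \mathring{Q}_h\times \mathring{X}_h$. By Lemma \ref{lem:LBB} there is $\bv_1\in \mathring{\bV}_h\subset \bV_h$ with $\Div \bv_1 = -q$ in $\Omega_h$ and $\|\nab \bv_1\|_{L^2(\Omega_h)}\le C\|q\|_{L^2(\Omega_h)}$ (obtained by scaling the extremal function from the inf-sup quotient so its divergence is exactly $-q$). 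Since $\bv_1$ has zero boundary trace on $\p\Omega_h$, the boundary term in $b_h$ vanishes and $b_h(\bv_1,(q,\mu)) = \|q\|_{L^2(\Omega_h)}^2$. Using the norm equivalence on $\bV_h$ from Lemma \ref{lem:EquivNorms} (so that $\|\bv_1\|_{1,h}\le C\|\nab \bv_1\|_{L^2(\Omega_h)}$, because $\bv_1$ vanishes on $\p\Omega_h$ its $\|\cdot\|_h$-norm is just $\|\nab \bv_1\|_{L^2(\Omega_h)}$), this yields the bound $b_h(\bv_1,(q,\mu))\ge c\|\bv_1\|_{1,h}\|q\|_{L^2(\Omega_h)}$.

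Next I would invoke Lemma \ref{lem:LMStab}: there is $\bv_2\in \bV_h$ with $\int_{\p\Omega_h}(\bv_2\cdot \bn_h)\mu\, ds \ge c\|\mu\|_{-1/2,h}\|\bv_2\|_{1,h}$; by homogeneity I can normalize so that $\|\bv_2\|_{1,h} = \|\mu\|_{-1/2,h}$. Note that $b_h(\bv_2,(q,\mu)) = -\int_{\Omega_h}(\Div \bv_2)q\, dx + \int_{\p\Omega_h}(\bv_2\cdot \bn_h)\mu\, ds$; the first (pressure) term is not sign-controlled, but it is bounded by $\|\bv_2\|_{1,h}\|q\|_{L^2(\Omega_h)}\le \|\mu\|_{-1/2,h}\|q\|_{L^2(\Omega_h)}$ via Cauchy--Schwarz and a trace/inverse bound, or more cleanly via \eqref{eqn:bCont}. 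The standard trick is then to test with $\bv = \bv_1 + \eta \bv_2$ for a small parameter $\eta>0$ to be chosen. Then
\begin{align*}
b_h(\bv,(q,\mu)) &= b_h(\bv_1,(q,\mu)) + \eta\, b_h(\bv_2,(q,\mu))\\
&\ge \|q\|_{L^2(\Omega_h)}^2 + \eta c\|\mu\|_{-1/2,h}^2 - \eta C\|\mu\|_{-1/2,h}\|q\|_{L^2(\Omega_h)}.
\end{align*}
Applying Young's inequality to the cross term, $\eta C\|\mu\|_{-1/2,h}\|q\|_{L^2(\Omega_h)}\le \tfrac12\|q\|_{L^2(\Omega_h)}^2 + \tfrac12 \eta^2 C^2\|\mu\|_{-1/2,h}^2$, and then choosing $\eta$ small enough that $\eta c - \tfrac12\eta^2 C^2\ge \tfrac12\eta c$, gives $b_h(\bv,(q,\mu))\ge \tfrac12\|q\|_{L^2(\Omega_h)}^2 + \tfrac12\eta c\|\mu\|_{-1/2,h}^2\ge c'\|(q,\mu)\|^2$.

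To finish, I would bound $\|\bv\|_{1,h}\le \|\bv_1\|_{1,h} + \eta\|\bv_2\|_{1,h}\le C\|q\|_{L^2(\Omega_h)} + \eta\|\mu\|_{-1/2,h}\le C'\|(q,\mu)\|$, so that
\[
\sup_{\bw\in \bV_h\backslash\{0\}} \frac{b_h(\bw,(q,\mu))}{\|\bw\|_{1,h}}\ge \frac{b_h(\bv,(q,\mu))}{\|\bv\|_{1,h}}\ge \frac{c'\|(q,\mu)\|^2}{C'\|(q,\mu)\|} = \beta\|(q,\mu)\|,
\]
with $\beta$ depending only on $\beta_1,\beta_2$ (through the constants $c, C, c', C'$). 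Finally, if the theorem statement also intends the analogous estimate for $b_h^e$ (as the surrounding text suggests), I would derive it from \eqref{eqn:MainIS} and the perturbation estimate \eqref{eqn:bhPert}: $|b_h^e(\bv,(q,\mu)) - b_h(\bv,(q,\mu))|\le Cc_\delta\|\bv\|_{1,h}\|(q,\mu)\|$, so for $c_\delta$ small enough a triangle-inequality argument transfers the inf-sup bound (with $\beta$ replaced by $\beta - Cc_\delta > 0$) to $b_h^e$. The only mildly delicate point is making sure the scaling/normalization of $\bv_1$ and $\bv_2$ is done so that the divergence is \emph{exactly} $-q$ and the $\|\cdot\|_{1,h}$ norms come out as claimed; everything else is the textbook Boffi--Brezzi--Fortin combination argument, so I do not anticipate a genuine obstacle here — the real work was in Lemmas \ref{lem:LBB} and \ref{lem:LMStab}, which are already established.
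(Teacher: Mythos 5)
Your proposal is correct and follows essentially the same route as the paper: both proofs combine Lemma \ref{lem:LBB} and Lemma \ref{lem:LMStab} by testing with a parametrized linear combination of the two extremal velocities (you use $\bv_1+\eta\bv_2$ with $\eta$ small and Young's inequality, the paper uses $c\bv_1+\bv_2$ with $c$ large), exploiting that $\bv_1\in\mathring{\bV}_h$ leaves the boundary term untouched and absorbing the cross term $\int_{\Omega_h}(\Div\bv_2)\,q\,dx$. The only cosmetic difference is your use of an exact right inverse of the divergence ($\Div\bv_1=-q$), which is legitimate because $\Div\mathring{\bV}_h\subseteq\mathring{Q}_h$ makes the inf-sup condition of Lemma \ref{lem:LBB} equivalent to bounded surjectivity, whereas the paper works directly with near-maximizers of the inf-sup quotients.
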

\begin{proof}
We use  Lemmas \ref{lem:LBB} and \ref{lem:LMStab} and
follow the arguments in \cite[Theorem 3.1]{HowellWalkington11}.

Fix $(q,\mu)\in \mathring{Q}_h\times \mathring{X}_h$.
The statement \eqref{eqn:LinearInfSup}
implies the existence of $\bv_2\in \bV_h$
such that $\|\bv_2\|_{1,h} \leq 1$ and
\begin{align*}
\int_{\p\Omega_h} (\bv_2\cdot \bn)\mu\, ds
\ge  \beta_2\| \mu\|_{-1/2,h}.
\end{align*}
By Lemma \ref{lem:LBB}, there exists $\bv_1\in \mathring{\bV}_h$ satisfying $\|\nab \bv_1\|_{L^2(\Omega_h)} = \|\bv_1\|_{1,h} \leq 1$ and
\begin{align*}
-\int_{\Omega_h} (\Div \bv_1)q \ge \beta_1 \|q\|_{L^2(\Omega_h)}.
\end{align*}

Set  $\bv = c\bv_1 +\bv_2$ for some $c>0$, so that $\|\bv\|_{1,h}\le (1+c)$,
and
\begin{align*}
-\int_{\Omega_h} (\Div \bv)q\, dx
& \ge  c \beta_1\|q\|_{L^2(\Omega_h)} - \|\Div \bv_2\|_{L^2(\Omega_h)} \|q\|_{L^2(\Omega_h)}\\
& \ge   c \beta_1 \|q\|_{L^2(\Omega_h)} -\sqrt{2} \|\nab \bv_2\|_{L^2(\Omega_h)} \|q\|_{L^2(\Omega_h)}\\
& \ge c \beta_1\|q\|_{L^2(\Omega_h)} -\sqrt{2} \|\bv_2\|_{1,h} \|q\|_{L^2(\Omega_h)}\\
& =\big(c \beta_1-\sqrt{2}\big)\|q\|_{L^2(\Omega_h)}.
\end{align*}
Because $\bv_1|_{\p\Omega} = 0$, we have
\begin{align*}
\int_{\p\Omega_h} (\bv\cdot \bn_h)\mu\, ds 
=\int_{\p\Omega_h} (\bv_2\cdot \bn_h)\mu\, ds 
\ge \beta_2 \|\mu\|_{-1/2,h}. 
\end{align*}
Therefore,
\begin{align*}
b_h(\bv,(q,\mu)) 
&\ge \big(c \beta_1-\sqrt{2}\big)\|q\|_{L^2(\Omega_h)}+\beta_2\|\mu\|_{-1/2,h}\\
&\ge (1+c)^{-1}\Big( \big(c \beta_1-\sqrt{2}\big)\|q\|_{L^2(\Omega_h)}+\beta_2\|\mu\|_{-1/2,h}
\Big)\|\bv\|_{1,h}.
\end{align*}
We now choose $c>0$ sufficiently large to obtain the desired result.
\end{proof}

\begin{corollary}\label{cor:EIS}
Provided $c_\delta$ in Assumption \eqref{eqn:Assumption} is sufficiently small, there exists
$\beta_e>0$ independent of $h$ such that
there holds
\begin{align}\label{eqn:mainISPert}
\beta_e \|(q,\mu)\|
\le  \sup_{\bv\in \bV_h\backslash \{0\}} \frac{b_h^e(\bv,(q,\mu))}{\|\bv\|_{1,h}}\qquad \forall (q,\mu)\in \mathring{Q}_h\times \mathring{X}_h.
\end{align}
\end{corollary}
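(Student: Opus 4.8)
The plan is to derive \eqref{eqn:mainISPert} from Theorem \ref{thm:MainIS} by treating the boundary-correction term as a small perturbation. Fix $(q,\mu)\in \mathring{Q}_h\times \mathring{X}_h$. By \eqref{eqn:MainIS}, there exists $\bv\in \bV_h$ with $\|\bv\|_{1,h}\le 1$ and $b_h(\bv,(q,\mu))\ge \beta\|(q,\mu)\|$. The idea is that $b_h^e$ differs from $b_h$ only in the boundary term, where $\bv\cdot \bn_h$ is replaced by $(S_h\bv)\cdot \bn_h$, and by \eqref{eqn:bhPert} this difference is controlled by $Cc_\delta\|\bv\|_{1,h}\|(q,\mu)\|$. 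Hence
\[
b_h^e(\bv,(q,\mu)) \ge b_h(\bv,(q,\mu)) - \big|b_h(\bv,(q,\mu)) - b_h^e(\bv,(q,\mu))\big| \ge \beta\|(q,\mu)\| - Cc_\delta\|(q,\mu)\|.
\]

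Choosing $c_\delta$ small enough that $Cc_\delta \le \beta/2$ gives $b_h^e(\bv,(q,\mu)) \ge (\beta/2)\|(q,\mu)\|$ while still $\|\bv\|_{1,h}\le 1$, and therefore
\[
\sup_{\bw\in \bV_h\backslash\{0\}} \frac{b_h^e(\bw,(q,\mu))}{\|\bw\|_{1,h}} \ge \frac{b_h^e(\bv,(q,\mu))}{\|\bv\|_{1,h}} \ge \frac{\beta}{2}\|(q,\mu)\|,
\]
which is \eqref{eqn:mainISPert} with $\beta_e = \beta/2$, a constant independent of $h$ (since $\beta$, $C$ are). One small point to be careful about: if the $\bv$ produced by Theorem \ref{thm:MainIS} is zero, then $(q,\mu)=(0,0)$ and the estimate holds trivially; otherwise $\bv\neq 0$ and the supremum bound above is legitimate.

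There is essentially no hard part here — the entire content is the perturbation bound \eqref{eqn:bhPert}, which is already established in Lemma \ref{lem:bhPert} using the inverse-trace estimate \eqref{eqn:IT} and Assumption \eqref{eqn:Assumption}. The only thing requiring any attention is bookkeeping the smallness threshold on $c_\delta$: it must be chosen small relative to $\beta$ (which itself depends only on $\beta_1,\beta_2$ from Lemmas \ref{lem:LBB} and \ref{lem:LMStab}), and this is compatible with the other smallness requirements on $c_\delta$ already imposed in Lemmas \ref{lem:EquivNorms} and \ref{lem:coercivity}, so one simply takes $c_\delta$ below the minimum of all these thresholds.
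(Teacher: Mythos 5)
Your proof is correct and takes essentially the same route as the paper: both deduce \eqref{eqn:mainISPert} from Theorem \ref{thm:MainIS} together with the perturbation bound \eqref{eqn:bhPert}, with $c_\delta$ chosen small relative to $\beta$ (the paper works directly with the suprema and gets $\beta_e=\beta-Cc_\delta$, while you pick a maximizing test function and get $\beta_e=\beta/2$, an immaterial difference).
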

\begin{proof}
Combining Theorem \ref{thm:MainIS} and Lemma \ref{lem:bhPert}, we have
\begin{align*}
\beta \|(q,\mu)\|
\le  \sup_{\bv\in \bV_h\backslash \{0\}} \frac{b^e_h(\bv,(q,\mu))}{\|\bv\|_{1,h}}+ C c_\delta \|(q,\mu)\|\qquad \forall (q,\mu)\in \mathring{Q}_h\times \mathring{X}_h.
\end{align*}
This result implies \eqref{eqn:mainISPert} for $c_\delta$ sufficiently small
with $\beta_e = \beta-Cc_\delta$.
\end{proof}

\begin{theorem}
Let $(\bu_h,p_h,\lambda_h)\in \bV_h\times \mathring{Q}_h\times \mathring{X}_h$
satisfy \eqref{eqn:LMM}.
Then, provided $c_\delta$ in Assumption \eqref{eqn:Assumption}
is sufficiently small, there holds
\begin{equation}\label{eqn:MainStabLine}
\nu \|\bu_h\|_{1,h}+\|(p_h,\lambda_h)\| \le C \|{\bm f}\|_{-1,h},
\end{equation}
where $\|{\bm f}\|_{-1,h} = \sup_{\bv\in \bV_h\backslash \{0\}} \frac{\int_{\Omega_h} {\bm f}\cdot \bv\, dx}{\|\bv\|_{1,h}}$.
Consequently, there exists a unique solution to \eqref{eqn:LMM}.
\end{theorem}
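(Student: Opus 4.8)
The plan is to establish \eqref{eqn:MainStabLine} as a standard consequence of the Brezzi theory applied to the saddle-point system \eqref{eqn:LMM}, using the coercivity of $a_h(\cdot,\cdot)$ from Lemma \ref{lem:coercivity}, the continuity estimates of Lemma \ref{lem:bhPert}, and the perturbed inf-sup condition of Corollary \ref{cor:EIS}. The subtlety is that the trial form uses $b_h$ but the test form in \eqref{eqn:LMM2} uses $b_h^e$; this mismatch means the system is not symmetric in the usual Brezzi sense, so I would argue directly rather than quoting a black-box abstract theorem.

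First I would bound the velocity. Taking $\bv = \bu_h$ in \eqref{eqn:LMM1}, using $b_h(\bu_h,(p_h,\lambda_h)) = b_h^e(\bu_h,(p_h,\lambda_h)) + \big(b_h - b_h^e\big)(\bu_h,(p_h,\lambda_h))$, and noting that $b_h^e(\bu_h,(p_h,\lambda_h)) = 0$ by \eqref{eqn:LMM2} (valid since $(p_h,\lambda_h)\in \mathring Q_h\times\mathring X_h$), I get
\[
a_h(\bu_h,\bu_h) = \int_{\Omega_h}{\bm f}\cdot\bu_h\,dx - \big(b_h-b_h^e\big)(\bu_h,(p_h,\lambda_h)).
\]
Coercivity (Lemma \ref{lem:coercivity}) gives $c_1\nu\|\bu_h\|_{1,h}^2 \le a_h(\bu_h,\bu_h)$, while the right-hand side is bounded by $\|{\bm f}\|_{-1,h}\|\bu_h\|_{1,h} + Cc_\delta\|\bu_h\|_{1,h}\|(p_h,\lambda_h)\|$ using the definition of $\|{\bm f}\|_{-1,h}$ and \eqref{eqn:bhPert}. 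Hence
\begin{equation*}
\nu\|\bu_h\|_{1,h} \le C\big(\|{\bm f}\|_{-1,h} + c_\delta\|(p_h,\lambda_h)\|\big).
\end{equation*}

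Next I would bound the pressure-multiplier pair. From \eqref{eqn:LMM1}, $b_h(\bv,(p_h,\lambda_h)) = \int_{\Omega_h}{\bm f}\cdot\bv\,dx - a_h(\bu_h,\bv)$ for all $\bv\in\bV_h$; combined with \eqref{eqn:bhPert}, continuity of $a_h$ from \eqref{eqn:continuity}, and the norm equivalence of Lemma \ref{lem:EquivNorms} (so $\tbar{\bu_h}_h,\tbar{\bv}_h \le C\|\bv\|_{1,h}$ on $\bV_h$), this yields
\[
b_h^e(\bv,(p_h,\lambda_h)) \le \big(\|{\bm f}\|_{-1,h} + C\nu\|\bu_h\|_{1,h}\big)\|\bv\|_{1,h}.
\]
Dividing by $\|\bv\|_{1,h}$, taking the supremum over $\bv\in\bV_h\setminus\{0\}$, and invoking Corollary \ref{cor:EIS} gives $\beta_e\|(p_h,\lambda_h)\| \le C(\|{\bm f}\|_{-1,h} + \nu\|\bu_h\|_{1,h})$. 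Substituting the velocity bound from the previous step produces $\|(p_h,\lambda_h)\| \le C\|{\bm f}\|_{-1,h} + Cc_\delta\|(p_h,\lambda_h)\|$, and absorbing the last term for $c_\delta$ small enough gives $\|(p_h,\lambda_h)\| \le C\|{\bm f}\|_{-1,h}$. Feeding this back into the velocity estimate closes the a priori bound \eqref{eqn:MainStabLine}.

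Finally, well-posedness: since \eqref{eqn:LMM} is a square linear system (finite-dimensional, with $\dim(\bV_h) = \dim(\bV_h)$ and $\dim(\mathring Q_h\times\mathring X_h) = \dim(\mathring Q_h\times\mathring X_h)$ matched through the two blocks), uniqueness follows from the a priori estimate applied to ${\bm f}=0$, which forces $\bu_h=0$ and then $(p_h,\lambda_h)=0$; uniqueness of the square system then gives existence. The main obstacle I anticipate is handling the $b_h$ versus $b_h^e$ asymmetry cleanly — one must be careful that the test in \eqref{eqn:LMM2} genuinely kills the $b_h^e$-term when probing with $\bu_h$, and that the perturbation estimate \eqref{eqn:bhPert} is the version with $b_h - b_h^e$, so the absorption of the $c_\delta$ terms requires $c_\delta$ small relative to both $c_1$ (via Lemma \ref{lem:coercivity}) and $\beta$ (via Corollary \ref{cor:EIS}).
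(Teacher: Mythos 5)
Your proof is correct and follows essentially the same route as the paper: test with $\bv=\bu_h$, use \eqref{eqn:LMM2} to eliminate the constraint term, control the $b_h-b_h^e$ mismatch by a $Cc_\delta$ term, apply coercivity, then bound $\|(p_h,\lambda_h)\|$ via inf-sup and close by absorption for $c_\delta$ small; uniqueness plus the square finite-dimensional structure gives existence, as in the paper. The only real difference is that the paper estimates $\|(p_h,\lambda_h)\|$ with the unperturbed inf-sup condition for $b_h$ (Theorem \ref{thm:MainIS}), whereas you route through $b_h^e$ via Corollary \ref{cor:EIS}; that also works, but then your displayed bound for $b_h^e(\bv,(p_h,\lambda_h))$ should carry the additional $Cc_\delta\|\bv\|_{1,h}\|(p_h,\lambda_h)\|$ term coming from \eqref{eqn:bhPert}, which is harmless since it is absorbed exactly like the other $c_\delta$ terms you already handle.
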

\begin{proof}
Setting $\bv= \bu_h$ in \eqref{eqn:LMM1}, $(q,\mu) = (p_h,\lambda_h)$ in \eqref{eqn:LMM2},
and subtracting the resulting expressions yields
\begin{alignat*}{2}
a_h(\bu_h,\bu_h) & = \int_{\Omega_h} {\bm f}\cdot \bu_h\, dx +\int_{\p\Omega_h} \big((S_h \bu_h - \bu_h) \cdot \bn_h\big) \lambda_h\, ds.
\end{alignat*}

We apply the coercivity result in Lemma \ref{lem:coercivity},
the Cauchy-Schwarz inequality, and \eqref{eqn:ShvDiff} to get
\begin{align}\label{eqn:Energy}
\nu c_1 \|\bu_h\|_{1,h}^2
& \le \|{\bm f}\|_{-1,h} \|\bu_h\|_{1,h} + C c_\delta \|\bu_h\|_{1,h}\|\lambda_h\|_{-1/2,h}.
\end{align}

On the other hand, we use inf-sup stability \eqref{eqn:MainIS} to conclude
\begin{align*}
 \beta \|(p_h,\lambda_h)\|_{-1/2,h}
&\le \sup_{\bv\in \bV_h\backslash \{0\}} \frac{b_h(\bv,(p_h,\lambda_h))}{\|\bv\|_{1,h}}\\
&\le \sup_{\bv\in \bV_h\backslash \{0\}} \frac{\int_{\Omega_h} {\bm f}\cdot \bv \, dx -a_h(\bu_h,\bv)}{\|\bv\|_{1,h}}.
\end{align*}
Using the continuity estimate \eqref{eqn:continuity} gets
\begin{align}\label{eqn:LineFEX}
\beta \|\lambda_h\|_{-1/2,h}\le \beta \|(p_h,\lambda_h)\|\le  \|{\bm f}\|_{-1,h} + c_2(1+\sigma) \nu \|\bu_h\|_{1,h}.
\end{align}
Inserting this estimate into \eqref{eqn:Energy}, we obtain
\begin{align*}
\nu \big(c_1 - C c_\delta c_2\beta^{-1}(1+\sigma)\big) \|\bu_h\|_{1,h} \le (1+C c_\delta \beta^{-1})\|{\bm f}\|_{-1,h}.
\end{align*}
Thus, $\|\bu_h\|_{1,h}\le C \nu^{-1}  \|{\bm f}\|_{-1,h}$ for $c_\delta$ sufficiently small.
This, combined with \eqref{eqn:LineFEX}, yields the desired stability result \eqref{eqn:MainStabLine}.

\end{proof}

\section{Convergence Analysis}\label{sec-convergence}

In this section, we show that 
the solution to the the finite element 
method \eqref{eqn:LMM} converges with optimal order
provided the exact solution is sufficiently smooth.
As a first step, we derive some consistency
estimates for the boundary correction
operator and the bilinear form $a_h(\cdot,\cdot)$.

\subsection{Consistency Estimates}
The following lemma bounds
the boundary correction operator acting on the exact velocity function.
The result is essentially an estimate on the Taylor polynomial remainder
and follows directly from the arguments in \cite[Proposition 3]{ACS20C} (also see \cite{BDT72}).
For this reason, its proof is omitted.
\begin{lemma}\label{lem:Consistency1}
For any $\bu\in \bH^3(\Omega)\cap \bH^1_0(\Omega)$, there holds
\begin{align*}
\sum_{e\in \calE_h^B} h_e^{-1} \int_e \big|S_h \bu\big|^2\, ds\le C h^4 \|\bu\|_{H^3(\Omega)}^2.
\end{align*}
\end{lemma}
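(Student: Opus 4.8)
The key observation is that, since $\bu \in \bH^1_0(\Omega)$, we have $\bu = 0$ on $\p\Omega$, and therefore $S_h\bu$ restricted to a boundary edge $e \in \calE_h^B$ measures precisely the failure of the second-order Taylor expansion of $\bu$ (based at a point $x \in e \subset \p\Omega_h$, expanded in the transfer direction $\bd$ over a distance $\delta(x)$) to reach the true value $\bu(\bp(x))=0$ (or, more directly, the difference between $\bu$ at $x$ and its value at the corresponding boundary point $M(x)=x+\frak{\bd}(x)\in\p\Omega$, where $\bu$ vanishes). Hence $|(S_h\bu)(x)|$ is a Taylor remainder and is controlled by $\delta(x)^3$ times a bound on the third-order directional derivatives of $\bu$ along the segment from $x$ to $M(x)$.

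First I would fix $e \in \calE_h^B$ and let $T_e \in \calT_h$ be the (macro) element with $e \subset \p T_e$, and let $U_e$ denote the segment-swept region $\{x + t\frak{\bd}(x): x\in e,\ t\in[0,1]\}$, which by Assumption~\eqref{eqn:Assumption} lies within an $O(h)$-neighborhood of $e$ and in particular is contained in a fixed number of elements of $\calS_h$ near $T_e$, all inside a slightly enlarged domain on which $\bu\in\bH^3$. Using the integral form of the Taylor remainder, $(S_h\bu)(x) = -\frac{1}{2}\int_0^{\delta(x)} (\delta(x)-s)^2 \frac{\p^3\bu}{\p\bd^3}(x+s\bd)\, ds$ (up to sign, using $\bu(M(x))=0$), I would bound $|(S_h\bu)(x)|^2 \le C\,\delta(x)^5 \int_0^{\delta(x)} |D^3\bu(x+s\bd)|^2\, ds$ by Cauchy–Schwarz. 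Integrating over $x\in e$, changing variables, and using Assumption~\eqref{eqn:Assumption} ($\delta_e \le c_\delta h_e$) to pull out the powers of $\delta_e$, together with a trace-type / co-area estimate relating the integral of $|D^3\bu|^2$ over the thin region $U_e$ to $h_e\|D^3\bu\|_{L^2(\widetilde T_e)}^2$ (where $\widetilde T_e$ is the fixed patch around $T_e$), I obtain
\[
h_e^{-1}\int_e |S_h\bu|^2\, ds \le C\, h_e^{-1}\,\delta_e^{5}\, h_e\, \|D^3\bu\|_{L^2(\widetilde T_e)}^2 \le C\, h_e^{4}\, \|\bu\|_{H^3(\widetilde T_e)}^2.
\]
Summing over $e\in\calE_h^B$ and using the finite-overlap property of the patches $\{\widetilde T_e\}$ (a consequence of shape-regularity and quasi-uniformity) gives $\sum_e h_e^{-1}\int_e |S_h\bu|^2 \le C h^4 \|\bu\|_{H^3(\Omega)}^2$.

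The main obstacle is the careful handling of the thin sliver region $U_e$ over which the third derivatives are integrated: one must ensure (i) that $U_e$ lies inside $\Omega$ (or a fixed slight extension of it on which $H^3$ regularity is available) — this is where smallness of $c_\delta$ in Assumption~\eqref{eqn:Assumption} and the closest-point-projection geometry from \cite[Lemma 14.16]{GilbargTrudinger} enter — and (ii) that the anisotropic trace inequality $\int_{U_e} |D^3\bu|^2 \le C h_e \|D^3\bu\|_{L^2(\widetilde T_e)}^2$ holds with constant independent of $h$, which requires a scaling argument on the reference configuration together with the fact that the transfer direction $\bd$ and length $\delta$ vary smoothly along $e$. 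Since, as the authors note, this is exactly the Taylor-remainder estimate carried out in \cite[Proposition 3]{ACS20C} and \cite{BDT72}, the proof reduces to citing those arguments with the minor bookkeeping of the vector-valued, Clough–Tocher setting, and I would simply remark that the details are identical.
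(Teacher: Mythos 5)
Your proposal follows essentially the same route as the paper, which does not write out a proof at all but points to the Taylor--remainder arguments of \cite[Proposition 3]{ACS20C} and \cite{BDT72}: you correctly identify that $\bu=0$ on $\p\Omega$ makes $(S_h\bu)(x)$ exactly the second-order Taylor remainder along the segment from $x\in\p\Omega_h$ to $M(x)\in\p\Omega$, bound it by Cauchy--Schwarz via $|S_h\bu(x)|^2\le C\delta(x)^5\int_0^{\delta(x)}|D^3\bu(x+s\bd)|^2\,ds$, integrate over $e$, and invoke Assumption \eqref{eqn:Assumption} together with finite overlap. That is the intended proof.

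One step in your bookkeeping should be removed, however: the ``anisotropic trace / co-area estimate'' $\int_{U_e}|D^3\bu|^2\le C h_e\|D^3\bu\|^2_{L^2(\widetilde T_e)}$ is neither needed nor valid as stated. Since $D^3\bu$ is merely an $L^2$ function, an area integral over the sliver $U_e$ (which lies in $\Omega\setminus\Omega_h$, hence is \emph{not} contained in the interior patch $\widetilde T_e\subset\overline\Omega_h$) cannot be controlled by the integral over $\widetilde T_e$, with or without a factor $h_e$; such strip estimates require extra regularity and containment. Fortunately the argument closes without it: after the change of variables you simply keep $\|D^3\bu\|^2_{L^2(U_e)}$, so that
\begin{align*}
h_e^{-1}\int_e|S_h\bu|^2\,ds\le C\,h_e^{-1}\delta_e^{5}\,\|D^3\bu\|^2_{L^2(U_e)}\le C\,c_\delta^5\,h_e^{4}\,\|D^3\bu\|^2_{L^2(U_e)},
\end{align*}
and summing over $e\in\calE_h^B$ using the finite overlap of the slivers $U_e\subset\Omega$ gives the stated $Ch^4\|\bu\|^2_{H^3(\Omega)}$ directly. (Your other cautionary points are well taken: one must check that the segments $x+t\frak{\bd}(x)$, $0\le t\le 1$, remain in $\overline\Omega$ -- automatic for the closest-point projection with $\delta$ small, and otherwise handled by a bounded $H^3$ extension -- and the pointwise Taylor formula for $\bu\in\bH^3$ is justified by density, as in the cited references.)
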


\begin{lemma}\label{lem:LapCons}
There holds for all $\bu\in \bH^3(\Omega)\cap \bH^1_0(\Omega)$,
\begin{align}\label{eqn:LapCons1}
\Big|-\nu\int_{\Omega_h} \Delta \bu\cdot \bv\, dx - a_h(\bu,\bv)\Big|
&\le C \nu h^2 \|\bu\|_{H^3(\Omega)} \|\bv\|_{1,h}\qquad \forall \bv\in \bV_h.
\end{align}
If $\Div \bu = 0$ in $\Omega$, then
\begin{align*}
\big|b_h^e(\bu,(q,\mu))\big|&\le C h^2 \|\bu\|_{H^3(\Omega)} \|(q,\mu)\|\qquad \ \forall (q,\mu)\in \mathring{Q}_h\times \mathring{X}_h.
\end{align*}
\end{lemma}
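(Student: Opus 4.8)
The plan is to prove \eqref{eqn:LapCons1} by reducing, via integration by parts on $\Omega_h$, the consistency error to only the boundary‑correction and penalty contributions evaluated at the exact velocity, and then to control these using the Taylor‑remainder bound of Lemma~\ref{lem:Consistency1}; the divergence‑free estimate will then follow from the same remainder bound applied directly.

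First I would integrate by parts. Since $\bu\in\bH^3(\Omega)\subset\bH^2(\Omega_h)$ and $\bv\in\bV_h\subset\bH^1(\Omega_h)$, one has $\nu\int_{\Omega_h}\nab\bu:\nab\bv\,dx=-\nu\int_{\Omega_h}\Delta\bu\cdot\bv\,dx+\nu\int_{\p\Omega_h}\frac{\p\bu}{\p\bn_h}\cdot\bv\,ds$. Inserting this into the definition of $a_h(\cdot,\cdot)$, the two boundary terms $\pm\nu\int_{\p\Omega_h}\frac{\p\bu}{\p\bn_h}\cdot\bv\,ds$ cancel, leaving the identity
\[
-\nu\int_{\Omega_h}\Delta\bu\cdot\bv\,dx-a_h(\bu,\bv)=-\nu\int_{\p\Omega_h}\frac{\p\bv}{\p\bn_h}\cdot(S_h\bu)\,ds-\nu\sum_{e\in\calE_h^B}\int_e\frac{\sigma}{h_e}(S_h\bu)\cdot(S_h\bv)\,ds.
\]
Both terms on the right involve only $S_h\bu$, which is small because $\bu$ vanishes on $\p\Omega$, i.e.\ $S_h\bu$ is the remainder of a second‑order Taylor expansion (this is exactly what Lemma~\ref{lem:Consistency1} quantifies).

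Then I would bound the two terms by Cauchy–Schwarz over the boundary edges. The first is at most $\nu\big(\sum_e h_e\|\nab\bv\|_{L^2(e)}^2\big)^{1/2}\big(\sum_e h_e^{-1}\|S_h\bu\|_{L^2(e)}^2\big)^{1/2}$; here $\sum_e h_e\|\nab\bv\|_{L^2(e)}^2\le\tbar{\bv}_h^2\le C\|\bv\|_{1,h}^2$ by the norm equivalences of Lemma~\ref{lem:EquivNorms}, while $\sum_e h_e^{-1}\|S_h\bu\|_{L^2(e)}^2\le Ch^4\|\bu\|_{H^3(\Omega)}^2$ by Lemma~\ref{lem:Consistency1}. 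The penalty term is at most $\nu\sigma\big(\sum_e h_e^{-1}\|S_h\bu\|_{L^2(e)}^2\big)^{1/2}\big(\sum_e h_e^{-1}\|S_h\bv\|_{L^2(e)}^2\big)^{1/2}$, where the first factor is again controlled by Lemma~\ref{lem:Consistency1} and the second by the bound $\sum_e h_e^{-1}\|S_h\bv\|_{L^2(e)}^2\le C\|\bv\|_{1,h}^2$ from Lemma~\ref{lem:EquivNorms}. Adding these gives \eqref{eqn:LapCons1}, with the constant allowed to depend on $\sigma$ but not on $h$ or $\nu$. For the last estimate, $\Div\bu=0$ in $\Omega$ implies $\Div\bu=0$ in $\Omega_h$, so $b_h^e(\bu,(q,\mu))=\int_{\p\Omega_h}((S_h\bu)\cdot\bn_h)\mu\,ds$; Cauchy–Schwarz bounds this by $\big(\sum_e h_e^{-1}\|S_h\bu\|_{L^2(e)}^2\big)^{1/2}\|\mu\|_{-1/2,h}\le Ch^2\|\bu\|_{H^3(\Omega)}\|(q,\mu)\|$, using Lemma~\ref{lem:Consistency1} and $\|\mu\|_{-1/2,h}\le\|(q,\mu)\|$.

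The only genuinely nontrivial input is Lemma~\ref{lem:Consistency1}, the $O(h^2)$ Taylor‑remainder estimate for $S_h\bu$ at a boundary‑vanishing velocity, which is cited; everything else is integration by parts, Cauchy–Schwarz, and the already‑established norm equivalences. The step requiring the most care is the algebra of the integration‑by‑parts identity — checking that exactly the right boundary terms cancel and that the surviving terms are precisely the correction and penalty contributions — together with the correct use of the trace/inverse estimates (through $\tbar{\cdot}_h$) to replace $\sum_e h_e\|\nab\bv\|_{L^2(e)}^2$ by $\|\bv\|_{1,h}^2$.
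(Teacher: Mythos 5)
Your argument is correct and follows essentially the same route as the paper: integration by parts to cancel the consistent terms, leaving only the $S_h\bu$ correction and penalty contributions, which are then bounded by Cauchy--Schwarz together with Lemma~\ref{lem:Consistency1} and the trace/inverse (norm-equivalence) estimates of Lemma~\ref{lem:EquivNorms}, and the same direct Cauchy--Schwarz argument for the $b_h^e$ bound. No gaps; the constant's dependence on $\sigma$ is handled just as in the paper.
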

\begin{proof}
We integrate-by-parts to write
\[
\Big|-\nu\int_{\Omega_h} \Delta \bu\cdot \bv\, dx - a_h(\bu,\bv)\Big| =\nu \Big| \sum_{e\in \calE_h^B} \int_e \frac{\partial \bv}{\partial \bn_h}(S_{h}\bu)\, ds+\sum_{e\in \calE_h^B} \frac{\sigma}{h_e} \int_{e}(S_h \bu) (S_h \bv)\, ds  \Big|.
\]
Next, we estimate the two terms on the right hand side of the above equality by using 
the Cauchy-Schwarz inequality, trace and inverse inequalities, along with Lemmas \ref{lem:EquivNorms} and \ref{lem:Consistency1} as follows:
\begin{align*}
\Big| \sum_{e\in \calE_h^B} \int_{e} \frac{\partial \bv}{\partial \bn_h}(S_{h}\bu)\, ds\Big|
&\le \Big( \sum_{e\in \calE_h^B} h_e \int_{e}\big|\frac{\partial \bv}{\partial \bn_h}\big|^2\, ds  \Big)^{1/2} \Big(\sum_{e\in \calE_h^B} h^{-1}_e \int_{e}|S_h \bu|^2\, ds  \Big)^{1/2}\\
&\le Ch^2\|\bu\|_{H^3(\Omega)}\|\bv\|_{1,h},
\end{align*}
and 
\begin{align*}
    \Big|\sum_{e\in \calE_h^B}\frac{\sigma}{h_e}  \int_{e}(S_h \bu) (S_h \bv)\, ds\Big| & \le \sigma \Big( \sum_{e\in \calE_h^B} h^{-1}_e \int_{e}|S_h \bu|^2\, ds  \Big)^{1/2} \Big( \sum_{e\in \calE_h^B} h^{-1}_e \int_{e}|S_h \bv|^2\, ds  \Big)^{1/2} \\
& \le Ch^2 \|\bu\|_{H^3(\Omega)}\|\bv\|_{1,h}.
\end{align*}
Thus, the first estimate \eqref{eqn:LapCons1} holds.

Similarly, another use of the Cauchy-Schwarz inequality with Lemma \ref{lem:Consistency1} yields
\[
\big|b_h^e(\bu,(q,\mu))\big|=\Big|\sum_{e\in \calE_h^B} \int_{e}(S_h \bu \cdot \bn_h)\mu\, ds \Big| \le Ch^2\|\bu\|_{H^3(\Omega)}\|\mu\|_{-1/2,h},
\]
and this completes the proof.
\end{proof}

\subsection{Approximation Properties of the Kernel}
We define the discrete kernel as
\[
\bZ_h = \{\bv\in \bV_h:\ b_h^e(\bv,(q,\mu))=0,\ \forall (q,\mu)\in \mathring{Q}_h\times \mathring{X}_h\}.
\]
Note that if $\bv\in \bZ_h$, then $\Div \bv=0$ in $\Omega_h$ (cf.~Lemma \ref{lem:DivFree}),
and 
\begin{equation}\label{eqn:KProp}
\int_{\p\Omega_h} ((S_h \bv)\cdot \bn_h)\mu\, ds=0\qquad \forall \mu\in \mathring{X}_h.
\end{equation}

In this section, we show
that the kernel $\bZ_h$ has optimal order approximation 
properties with respect to divergence-free smooth functions.
To this end, we define the orthogonal complement of $\bZ_h$ as
\[
\bZ_h^\perp:= \{\bv\in \bV_h:\ (\bv,\bw)_{1,h} = 0\ \ \forall \bw\in \bZ_h\},
\]
where $(\cdot,\cdot)_{1,h}$ is the inner product on $\bV_h$
that induces the norm $\|\cdot\|_{1,h}$. 

\begin{lemma}\label{lem:AIS}
There holds
\begin{align*}
{\beta}_e \|\bw\|_{1,h}\le \sup_{(q,\mu)\in \mathring{Q}_h\times \mathring{X}_h\backslash \{0\}} 
\frac{b_h^e(\bw,(q,\mu))}{\|(q,\mu)\|}\qquad \forall \bw\in \bZ_h^\perp.
\end{align*}
\end{lemma}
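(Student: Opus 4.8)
The plan is to derive this ``inf-sup on the orthogonal complement'' estimate from the full inf-sup condition for $b_h^e(\cdot,\cdot)$ established in Corollary \ref{cor:EIS}, together with a standard abstract closed-range argument. The key observation is that $\bZ_h^\perp$ is precisely the space on which $b_h^e$ is ``injective enough'': by definition of $\bZ_h$ as the kernel of the map $\bv\mapsto b_h^e(\bv,\cdot)$, the induced bilinear form on $\bZ_h^\perp \times (\mathring{Q}_h\times \mathring{X}_h)$ has trivial left kernel, and I want to show its ``inf-sup constant'' from the $\bZ_h^\perp$ side is bounded below by $\beta_e$.

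First I would fix $\bw\in \bZ_h^\perp$ and set $R(\cdot) := b_h^e(\bw,\cdot)$, a bounded linear functional on $\mathring{Q}_h\times \mathring{X}_h$; the right-hand side of the claimed inequality is exactly $\|R\|_{(\mathring{Q}_h\times \mathring{X}_h)^*}$ measured with the $\|(\cdot,\cdot)\|$-norm. Next, I would apply Corollary \ref{cor:EIS}: for every $(q,\mu)\in \mathring{Q}_h\times \mathring{X}_h$ there exists $\bv_{q,\mu}\in \bV_h$ with $\beta_e\|(q,\mu)\| \le b_h^e(\bv_{q,\mu},(q,\mu))/\|\bv_{q,\mu}\|_{1,h}$. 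This is the statement that the operator $B\colon \bV_h\to (\mathring{Q}_h\times \mathring{X}_h)^*$ associated to $b_h^e$ is surjective with a bounded right inverse of norm $\le \beta_e^{-1}$. Decompose $\bV_h = \bZ_h \oplus \bZ_h^\perp$; since $B$ kills $\bZ_h$, the restriction $B|_{\bZ_h^\perp}\colon \bZ_h^\perp \to (\mathring{Q}_h\times \mathring{X}_h)^*$ is a bijection, and the bounded right inverse of $B$ may be taken to land in $\bZ_h^\perp$ (project it). Hence $B|_{\bZ_h^\perp}$ has inverse of operator norm $\le \beta_e^{-1}$, which, unpacked, says exactly $\beta_e\|\bw\|_{1,h}\le \|B\bw\|_{*} = \sup_{(q,\mu)\ne 0} b_h^e(\bw,(q,\mu))/\|(q,\mu)\|$ for all $\bw\in \bZ_h^\perp$. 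This is the desired estimate.

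Concretely, to avoid invoking abstract operator machinery, I would argue directly as follows: given $\bw\in\bZ_h^\perp$, use the known inf-sup condition in the ``other direction'' — namely, apply Corollary \ref{cor:EIS} together with the fact that, for any nonzero $\bw\in \bZ_h^\perp$, there is some $(q,\mu)$ with $b_h^e(\bw,(q,\mu))\ne 0$ (else $\bw\in\bZ_h\cap\bZ_h^\perp=\{0\}$), and a dimension-counting/duality argument on the finite-dimensional spaces: the map $\bZ_h^\perp\ni\bw\mapsto b_h^e(\bw,\cdot)\in(\mathring{Q}_h\times\mathring{X}_h)^*$ is injective, and by Corollary \ref{cor:EIS} the map $\bV_h\to(\mathring{Q}_h\times\mathring{X}_h)^*$ is surjective with the quantitative bound, forcing $\dim\bZ_h^\perp = \dim(\mathring{Q}_h\times\mathring{X}_h)$ and making the restriction a bijection whose inverse inherits the norm bound $\beta_e^{-1}$. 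Either route delivers the inequality. I expect the main (minor) obstacle to be stating the closed-range/orthogonal-complement argument cleanly enough that the constant is identified as exactly $\beta_e$ rather than something implicitly $h$-dependent; this is handled by noting $\|\cdot\|_{1,h}$ is a genuine Hilbert norm on $\bV_h$ so the orthogonal decomposition $\bV_h=\bZ_h\oplus\bZ_h^\perp$ is isometric in the relevant sense and the projection onto $\bZ_h^\perp$ does not increase the $\|\cdot\|_{1,h}$-norm.
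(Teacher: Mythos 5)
Your proposal is correct and takes essentially the same route as the paper: the paper's proof of Lemma \ref{lem:AIS} simply invokes Corollary \ref{cor:EIS} together with the standard mixed-method result (Brenner--Scott, Lemma 12.5.10) that an inf-sup condition for $b_h^e$ yields, via the closed-range/orthogonal-complement argument in the Hilbert space $(\bV_h,(\cdot,\cdot)_{1,h})$, the transposed bound with the same constant $\beta_e$ on $\bZ_h^\perp$ --- which is exactly the argument you spell out.
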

\begin{proof}
The result follows from Corollary \ref{cor:EIS}
and standard results in mixed finite element theory (cf.~\cite[Lemma 12.5.10]{BrennerScott08}).
%
\end{proof}

The following theorem states
the approximation properties of the discrete kernel.

\begin{theorem}\label{thm:ApproxKer}
For any $\bu\in \bH^3(\Omega)\cap \bH^1_0(\Omega)$
with $\Div \bu = 0$, there holds
\begin{align}\label{eqn:ApproxK}
\inf_{\bw\in \bZ_h} \tbar{\bu-\bw}_{h}\le C h^2 \|\bu\|_{H^3(\Omega)}.
\end{align}
\end{theorem}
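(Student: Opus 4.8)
The plan is to construct an explicit quasi-interpolant $\bw \in \bZ_h$ of $\bu$ with the desired approximation property. I would proceed in two stages: first approximate $\bu$ by a function in $\bV_h$ that is already divergence-free (exploiting the exactness of the Scott--Vogelius pair on $\calT_h^{ct}$), then correct it so that it additionally satisfies the boundary constraint \eqref{eqn:KProp} defining the kernel. For the first stage, let $\bPi_h \bu$ be a standard Scott--Vogelius commuting (quasi-)interpolant into $\bV_h$ satisfying the commuting relation $\Div(\bPi_h \bu) = P_h(\Div \bu) = 0$ (since $\Div \bu = 0$) together with the optimal bound $\tbar{\bu - \bPi_h \bu}_h \le C h^2 \|\bu\|_{H^3(\Omega)}$; such an operator exists by the theory underlying Lemma \ref{lem:LocalInfSup} (see \cite{GuzmanNeilan18}). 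Note $\bPi_h \bu$ need not vanish on $\p \Omega_h$ nor satisfy $\int_{\p\Omega_h}(\bPi_h\bu\cdot\bn_h)\,ds = 0$, but its boundary trace is small because $\bu = 0$ on $\p\Omega$, so $(S_h\bPi_h\bu)\cdot\bn_h$ is small on $\p\Omega_h$ by the same Taylor-remainder reasoning as Lemma \ref{lem:Consistency1}.

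For the second stage, I would use the inf-sup stability of $b_h^e$ on the orthogonal complement $\bZ_h^\perp$, i.e., Lemma \ref{lem:AIS}, to correct the boundary defect. Set $\bpsi := \bPi_h \bu$ and consider the linear functional $(q,\mu) \mapsto b_h^e(\bpsi,(q,\mu))$ on $\mathring{Q}_h \times \mathring{X}_h$. Since $\Div \bpsi = 0$, this functional reduces to $(q,\mu) \mapsto \int_{\p\Omega_h}((S_h\bpsi)\cdot\bn_h)\mu\,ds$, which is bounded in norm by $C h^2 \|\bu\|_{H^3(\Omega)}$ via the consistency estimate in Lemma \ref{lem:LapCons} (second inequality, applicable because $\Div\bu=0$). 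By Lemma \ref{lem:AIS} there exists $\bz \in \bZ_h^\perp$ with $b_h^e(\bz,(q,\mu)) = b_h^e(\bpsi,(q,\mu))$ for all $(q,\mu)$ and $\|\bz\|_{1,h} \le \beta_e^{-1} C h^2 \|\bu\|_{H^3(\Omega)}$. Then $\bw := \bpsi - \bz$ satisfies $b_h^e(\bw,(q,\mu)) = 0$ for all $(q,\mu)$, i.e., $\bw \in \bZ_h$, and by the triangle inequality together with the norm equivalence from Lemma \ref{lem:EquivNorms},
\begin{align*}
\tbar{\bu - \bw}_h \le \tbar{\bu - \bpsi}_h + \tbar{\bz}_h \le C h^2 \|\bu\|_{H^3(\Omega)} + C \|\bz\|_{1,h} \le C h^2 \|\bu\|_{H^3(\Omega)}.
\end{align*}
Taking the infimum over $\bZ_h$ yields \eqref{eqn:ApproxK}.

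The main obstacle I anticipate is verifying that the Scott--Vogelius interpolant $\bPi_h$ genuinely exists on the \emph{unfitted} computational mesh $\calT_h^{ct}$ with $h$-uniform constants — this is exactly where the macro-element structure of $\calT_h^{ct}$ (emphasized in the Remark after the construction of $\mct$) and Lemma \ref{lem:LocalInfSup} must be invoked carefully, since the boundary elements of $\Omega_h$ may be somewhat distorted relative to $\Omega$; one typically builds $\bPi_h$ elementwise using the local right-inverse of the divergence from Lemma \ref{lem:LocalInfSup} composed with a stable nodal or Scott--Zhang interpolant, and must check that this composition respects the global $H^1$-conformity and the commuting property while retaining optimal order. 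A secondary technical point is controlling $\tbar{\bz}_h$ by $\|\bz\|_{1,h}$, which is immediate from the norm equivalence in Lemma \ref{lem:EquivNorms}, and estimating $\tbar{\bu-\bpsi}_h$ in the triple-bar norm (which includes boundary-gradient terms $h_e\|\nab(\bu-\bpsi)\|_{L^2(e)}^2$); these boundary contributions are handled by standard trace inequalities on the strip of elements adjacent to $\p\Omega_h$ together with the fact that $\bu\in \bH^3(\Omega)$ so its trace and normal derivatives on $\p\Omega_h \subset \Omega$ are well-controlled.
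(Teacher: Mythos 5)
Your second stage is essentially the paper's own mechanism: correct into the kernel with an element of $\bZ_h^\perp$ via Lemma \ref{lem:AIS}, bound the defect with the consistency estimate of Lemma \ref{lem:LapCons}, and conclude by the triangle inequality and the norm equivalence of Lemma \ref{lem:EquivNorms}. One small inaccuracy there: Lemma \ref{lem:LapCons} bounds $b_h^e(\bu,(q,\mu))$ for the \emph{exact} $\bu$, not $b_h^e(\bPi_h\bu,(q,\mu))$. To bound the defect of your interpolant you must split $b_h^e(\bPi_h\bu,\cdot)=b_h^e(\bPi_h\bu-\bu,\cdot)+b_h^e(\bu,\cdot)$ and estimate the first term separately (its boundary part involves $S_h$ acting on $\bPi_h\bu-\bu$, which requires trace estimates on the boundary elements together with the interpolation error, since inverse inequalities do not apply to the non-polynomial part). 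This is fixable but is not literally the cited lemma.

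The substantive issue is your stage 1. The existence of a divergence-preserving (commuting) Scott--Vogelius interpolant $\bPi_h$ into $\bV_h$ on the \emph{unfitted} mesh $\calT_h^{ct}$, with optimal $\tbar{\cdot}_h$-error and $h$-uniform constants up to the boundary layer, is nowhere established in the paper and is not a routine consequence of Lemma \ref{lem:LocalInfSup}: that lemma only provides a local right inverse of the divergence with homogeneous boundary data on each macro-triangle, and assembling it with a Scott--Zhang operator into a global Fortin-type operator with approximation properties is comparable in difficulty to the inf-sup proof itself, with additional care needed because $\bu\notin\bH^1_0(\Omega_h)$ and the boundary macro-elements of $\Omega_h$ sit inside $\Omega$. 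You flag this as the main obstacle, but as written it is a genuine gap, and it is precisely the machinery the paper's proof is designed to avoid: starting from an \emph{arbitrary} $\bv\in\bV_h$ (ultimately the nodal interpolant, which need not be divergence free nor in the kernel), the paper uses Corollary \ref{cor:EIS} to produce a second corrector $\by\in\bZ_h^\perp$ with $b_h^e(\by,\cdot)=b_h^e(\bu-\bv,\cdot)$ and $\|\by\|_{1,h}\le C\beta_e^{-1}\|\bu-\bv\|_{1,h}$, plus the corrector $\bz\in\bZ_h^\perp$ with $b_h^e(\bz,\cdot)=-b_h^e(\bu,\cdot)$ bounded by Lemma \ref{lem:LapCons}, and sets $\bw=\bv+\by+\bz\in\bZ_h$. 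If you add the analogous corrector $\by$ to your construction, you can drop the commuting interpolant entirely and your argument reduces to the paper's.
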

\begin{proof}
Let $\bv\in \bV_h$ be arbitrary.
By Corollary \ref{cor:EIS}, there exists
$\by\in \bZ^\perp_h$ such that
\[
b_h^e(\by,(q,\mu)) = b_h^e(\bu-\bv,(q,\mu))\qquad \forall (q,\mu)\in \mathring{Q}_h\times \mathring{X}_h,
\]
and $\|\by\|_{1,h}\le C {\beta}_e^{-1}\|\bu-\bv\|_{1,h}$.
We then let $\bz\in \bZ_h^\perp$ satisfy
\[
b_h^e(\bz,(q,\mu)) = -b_h^e(\bu,(q,\mu))\qquad \forall (q,\mu)\in \mathring{Q}_h\times \mathring{X}_h.
\]
Then $\bw:=\bv+\by+\bz\in \bZ_h$, and 
\begin{align*}
\|\bu-\bw\|_{1,h}
&\le \|\bu-\bv\|_{1,h}+\|\by\|_{1,h}+\|\bz\|_{1,h}\\
&\le (1+C{\beta}_e^{-1})\|\bu-\bv\|_{1,h} +\|\bz\|_{1,h}.
\end{align*}

By Lemmas \ref{lem:AIS} and \ref{lem:LapCons},
\begin{align*}
\beta_e \|\bz\|_{1,h}
&\le \sup_{(q,\mu)\in \mathring{Q}_h\times \mathring{X}_h\backslash \{0\} }
\frac{b_h^e(\bu,(q,\mu))}{\|(q,\mu)\|}\le C h^2 \|\bu\|_{H^3(\Omega)},
\end{align*}
and so, by Lemma \ref{lem:EquivNorms},
\begin{align*}
\tbar{\bu-\bw}_{h}
&\le
\tbar{\bu-\bv}_{h}
+C\|\bv-\bw\|_{1,h}
\le C\big(\tbar{\bu-\bv}_{h}+\|\bu-\bw\|_{1,h}\big)\\
&\le C(1+\beta_e^{-1})\big(\tbar{\bu-\bv}_h+ \|\bu-\bv\|_{1,h} + h^2 \|\bu\|_{H^3(\Omega)}\big)\qquad \forall \bv\in \bV_h.
\end{align*}
Taking $\bv$ to be the nodal interpolant of $\bu$, we obtain the desired result. 
\end{proof}

\begin{theorem}\label{thm:MainConvResult}
Suppose that the solution to \eqref{eqn:Stokes}
has regularity $(\bu,p)\in \bH^3(\Omega)\cap \bH^1_0(\Omega)\times H^s(\Omega)$
 for some $1\le s \le 3$. Furthermore, without loss of generality, assume that $p|_{\Omega_h}\in L^2_0(\Omega_h)$.
 Then,
 \begin{subequations}
 \label{eqn:MainEst}
\begin{align}
 \label{eqn:MainEst1}
\|\bu-\bu_h\|_{1,h}&\le C\big(h^2 \|\bu\|_{H^3(\Omega)}+ \nu^{-1} \inf_{\mu\in X_h} \|p-\mu\|_{-1/2,h}\big),\\
 \label{eqn:MainEst2}
\|p-p_h\|_{L^2(\Omega_h)}&\le C(\nu h^2 \|\bu\|_{H^3(\Omega)}+\inf_{\mu \in X_h}\|p-\mu\|_{-1/2,h}+\inf_{q_h \in \mathring{Q}_h}\|p-q_h\|_{L^2(\Omega)}),\\
 \label{eqn:MainEst3}
\|\lambda_h-\mathring{\mu}\|_{-1/2,h}&\le C \big(\nu h^2 \|\bu\|_{H^3(\Omega)}+\|p-\mu\|_{-1/2,h}\big)\qquad \forall \mu\in X_h,
\end{align}
\end{subequations}
where $\mathring{\mu}:= \mu - \frac1{|\p\Omega_h|} \int_{\p\Omega_h} \mu\, ds$.
In particular there holds
\begin{align*}
\|\bu-\bu_h\|_{1,h} &\le C\big(h^2 \|\bu\|_{H^3(\Omega)}+ \nu^{-1} h^s \|p\|_{H^s(\Omega)}\big),\\
\|p-p_h\|_{L^2(\Omega_h)} &\le C\big(\nu h^2 \|\bu\|_{H^3(\Omega)}+h^{\min\{2,s\}}\|p\|_{H^{\min\{2,s\}}(\Omega)}\big).
\end{align*}
\end{theorem}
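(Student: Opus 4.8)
The plan is to follow the standard Strang-type analysis adapted to this mixed problem, exploiting the divergence-free/kernel structure so that the velocity error decouples from the pressure up to the Lagrange-multiplier consistency term. First I would write down the error equations: subtracting the variational formulation \eqref{eqn:LMM} from the PDE tested against $\bv\in\bV_h$ and $(q,\mu)\in\mathring Q_h\times\mathring X_h$, and using the consistency identity from Lemma \ref{lem:LapCons}, one obtains
\begin{align*}
a_h(\bu-\bu_h,\bv)+b_h(\bv,(p-\mathring\mu_0,\lambda_h)) &= \mathcal{R}_1(\bv),\\
b_h^e(\bu-\bu_h,(q,\mu)) &= \mathcal{R}_2((q,\mu)),
\end{align*}
where $\mathcal R_1,\mathcal R_2$ collect the boundary-correction consistency residuals, each bounded by $C\nu h^2\|\bu\|_{H^3(\Omega)}\|\cdot\|$ (for $\mathcal R_1$) and $Ch^2\|\bu\|_{H^3(\Omega)}\|\cdot\|$ (for $\mathcal R_2$) via Lemmas \ref{lem:Consistency1}--\ref{lem:LapCons}; here $\mathring\mu_0$ is the mean-zero part of an arbitrary $\mu\in X_h$, and the key point is that $b_h(\bv,(p,\mu-\mathring\mu_0))=b_h(\bv,(0,\text{const}))=0$ on $\bV_h$ by the constraint $\int_{\p\Omega_h}\bv\cdot\bn_h\,ds=0$, so the $H^{-1/2}$-approximation of $p$ on the boundary enters only through $\|p-\mu\|_{-1/2,h}$.

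Next I would estimate the velocity error. Let $\bw\in\bZ_h$ be the kernel approximant from Theorem \ref{thm:ApproxKer}, so $\tbar{\bu-\bw}_h\le Ch^2\|\bu\|_{H^3(\Omega)}$; it suffices to bound $\bu_h-\bw\in\bV_h$. Write $\bu_h-\bw = \bphi+\bpsi$ with $\bphi\in\bZ_h$, $\bpsi\in\bZ_h^\perp$. For $\bpsi$, use Lemma \ref{lem:AIS}: $\beta_e\|\bpsi\|_{1,h}\le\sup_{(q,\mu)} b_h^e(\bpsi,(q,\mu))/\|(q,\mu)\| = \sup b_h^e(\bu_h-\bw,(q,\mu))/\|(q,\mu)\|$, and since $\bu_h$ satisfies \eqref{eqn:LMM2} and $\bw\in\bZ_h$, this equals $0$, so $\bpsi=0$ — actually $\bu_h-\bw\in\bZ_h$ directly. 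For $\bphi=\bu_h-\bw\in\bZ_h$, test the first error equation with $\bv=\bphi$; the term $b_h(\bphi,(p-\mathring\mu_0,\lambda_h))$ reduces to $\int_{\p\Omega_h}(\bphi\cdot\bn_h)(p-\mathring\mu_0)\,ds$ (the divergence term vanishes since $\Div\bphi=0$), which is $b_h^e(\bphi,(p-\mathring\mu_0,0))$ minus a boundary-correction perturbation; using $\bphi\in\bZ_h$ so $\int_{\p\Omega_h}(S_h\bphi\cdot\bn_h)\mu\,ds=0$ for all $\mu\in\mathring X_h$, together with \eqref{eqn:bhPert} and Lemma \ref{lem:EquivNorms}, this term is controlled by $C(c_\delta\|\bphi\|_{1,h}+\text{stuff})\|p-\mu\|_{-1/2,h}$ plus the consistency residual. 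Combining with coercivity $c_1\nu\|\bphi\|_{1,h}^2\le a_h(\bphi,\bphi)$ and $a_h(\bw-\bu,\bphi)\le C\nu\tbar{\bu-\bw}_h\|\bphi\|_{1,h}$ from \eqref{eqn:continuity} gives $\nu\|\bphi\|_{1,h}\le C(\nu h^2\|\bu\|_{H^3}+\inf_{\mu\in X_h}\|p-\mu\|_{-1/2,h})$, hence \eqref{eqn:MainEst1} after a triangle inequality; \eqref{eqn:MainEst3} follows similarly by applying the inf-sup bound \eqref{eqn:MainIS} to $(0,\lambda_h-\mathring\mu)$ and using the first error equation plus continuity \eqref{eqn:continuity} with the just-obtained velocity bound. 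For the pressure estimate \eqref{eqn:MainEst2}, apply Theorem \ref{thm:MainIS} to $(p_h-q_h,0)$ for $q_h\in\mathring Q_h$, use the first error equation to express $b_h(\bv,(p_h-q_h,0))$ in terms of $a_h(\bu-\bu_h,\bv)$, $b_h(\bv,(p-q_h,\cdot))$ and $\lambda_h$, and insert \eqref{eqn:MainEst1}, \eqref{eqn:MainEst3}; a triangle inequality with $\inf_{q_h}\|p-q_h\|_{L^2(\Omega)}$ finishes it. The final displayed estimates follow by choosing $\mu$ and $q_h$ to be suitable interpolants of $p$ and invoking standard approximation bounds (noting $\|p-\mu\|_{-1/2,h}\le Ch^s\|p\|_{H^s}$ for the boundary interpolant when $s\le 3$, while the $L^2$-best approximation over $\mathring Q_h$ gives $h^{\min\{2,s\}}$).

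The main obstacle I anticipate is the careful bookkeeping around the Lagrange multiplier and the mean-value normalizations: one must track how the constant subtracted to land in $\mathring X_h$ interacts with the constraint $\int_{\p\Omega_h}\bv\cdot\bn_h\,ds=0$ on test functions, verify that replacing $p$ by an arbitrary $\mu\in X_h$ (not $\mathring X_h$) is legitimate because the discrepancy pairs to zero against $\bV_h$, and ensure the boundary-correction perturbation terms (differences between $b_h$ and $b_h^e$, and between $S_h\bv$ and $\bv$) are consistently absorbed using $c_\delta$ small, exactly as in \eqref{eqn:ShvDiff} and \eqref{eqn:bhPert}. The decoupling that makes the velocity estimate viscosity-robust up to the $\nu^{-1}\|p-\mu\|_{-1/2,h}$ term hinges entirely on $\bphi$ lying in the kernel $\bZ_h$, so getting that membership cleanly (via $\bw\in\bZ_h$ and $\bu_h$ satisfying \eqref{eqn:LMM2}) is the linchpin of the argument.
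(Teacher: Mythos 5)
Your overall skeleton matches the paper's argument (kernel approximant from Theorem \ref{thm:ApproxKer}, coercivity of $a_h$ applied to $\bu_h-\bw\in\bZ_h$, the inf-sup condition of Theorem \ref{thm:MainIS} for the pressure/multiplier, consistency via Lemmas \ref{lem:Consistency1}--\ref{lem:LapCons}), but there is a genuine gap at the decisive step. When you test the error equation with $\bphi=\bu_h-\bw\in\bZ_h$, the boundary term that survives is $\int_{\p\Omega_h}(\bphi\cdot\bn_h)\,\lambda_h\, ds$, not a term involving $p-\mathring{\mu}$: since $\Div\bphi=0$, $b_h(\bphi,(p_h,\lambda_h))$ reduces to the pairing of $\bphi\cdot\bn_h$ with the \emph{discrete} multiplier, so your claimed reduction to $\int_{\p\Omega_h}(\bphi\cdot\bn_h)(p-\mathring{\mu}_0)\,ds$ is not what the error equation produces. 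The kernel property \eqref{eqn:KProp} only lets you replace $\bphi$ by $\bphi-S_h\bphi$ against mean-zero multipliers, so after inserting $\mathring{\mu}$ the best available bound is $C c_\delta\|\bphi\|_{1,h}\,\|\lambda_h-\mathring{\mu}\|_{-1/2,h}$; this is \emph{not} controlled by $\|p-\mu\|_{-1/2,h}$, and your placeholder ``stuff'' is hiding exactly this term. Consequently the velocity bound cannot be closed first with the multiplier bound derived ``similarly'' afterwards: taken literally your plan is circular, because estimating $\|\lambda_h-\mathring{\mu}\|_{-1/2,h}$ through the inf-sup condition necessarily brings in $a_h(\bu-\bu_h,\bv)$, i.e.\ a term of size $c_2\nu(1+\sigma)\tbar{\bu-\bu_h}_h$, which contains the very velocity error you are trying to estimate.

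The paper resolves this by deriving the two inequalities simultaneously and coupling them: first $c_1\nu\|\bu_h-\bw\|_{1,h}\le C\big(\nu(1+\sigma)h^2\|\bu\|_{H^3(\Omega)}+\|p-\mu\|_{-1/2,h}+c_\delta\|\lambda_h-\mathring{\mu}\|_{-1/2,h}\big)$, and then, with $P_h$ the $L^2$-projection of $p$ onto $\mathring{Q}_h$, $\beta\|(p_h-P_h,\lambda_h-\mathring{\mu})\|\le C\big(\nu(1+\sigma)h^2\|\bu\|_{H^3(\Omega)}+c_2\nu(1+\sigma)\|\bu_h-\bw\|_{1,h}+\|p-\mu\|_{-1/2,h}\big)$; inserting the second into the first and using that $c_\delta$ is sufficiently small (so that $c_1-C\beta^{-1}c_2(1+\sigma)c_\delta>0$) absorbs the velocity term on the left and closes the argument, after which \eqref{eqn:MainEst2} and \eqref{eqn:MainEst3} follow from the same pair of inequalities. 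Your concluding remark that perturbation terms are ``absorbed using $c_\delta$ small'' gestures at this mechanism, but the proposal never sets up the second inequality before invoking the first, so the absorption has nothing to act on. The remaining ingredients of your outline (the role of the normalization $\mathring{\mu}$, constants pairing to zero against $\int_{\p\Omega_h}\bv\cdot\bn_h\,ds=0$, the observation that $\bu_h-\bw\in\bZ_h$, and the final interpolation bounds giving $h^s$ and $h^{\min\{2,s\}}$) are consistent with the paper.
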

\begin{proof}
Let $\bw\in \bZ_h$ be arbitrary.
We then have, for all $\bv\in \bZ_h$ and $\mu\in {X}_h$, 
\begin{align*}
a_h(\bu_h-\bw,\bv)
& = \int_{\Omega_h} {\bm f}\cdot \bv - a_h(\bw,\bv) - b_h(\bv,(p_h,\lambda_h))\\
& = -\nu \int_{\Omega_h}  \Delta \bu \cdot \bv\, dx - a_h(\bw,\bv) -\int_{\p\Omega_h} (\bv\cdot \bn_h)(p-\lambda_h)\, ds\\
& =-\nu \int_{\Omega_h}  \Delta \bu \cdot \bv\, dx - a_h(\bw,\bv) -\int_{\p\Omega_h} (\bv\cdot \bn_h)(p-\mu)\, ds
+ \int_{\p\Omega_h} (\bv\cdot \bn_h)(\lambda_h-\mathring{\mu})\, ds,
\end{align*}
where $\mathring{\mu} = \mu - \frac1{|\p\Omega_h|} \int_{\p\Omega_h} \mu\, ds\in \mathring{X}_h$.

Therefore by Lemma \ref{lem:LapCons},
the continuity of $a_h(\cdot,\cdot)$ (cf.~\eqref{eqn:continuity}),
and the Cauchy-Schwarz inequality, 
\begin{align*}
a_h(\bu_h-\bw,\bv)
&\le  C \big(\nu h^2 \|\bu\|_{H^3(\Omega)} + \|p-\mu\|_{-1/2,h}\big) \|\bv\|_{1,h} +
a_h(\bu-\bw,\bv) + \int_{\p\Omega_h} (\bv\cdot \bn_h)(\lambda_h-\mathring{\mu})\, ds\\
&\le  C \big(\nu h^2 \|\bu\|_{H^3(\Omega)} + \nu(1+\sigma) \tbar{\bu-\bw}_{h}+ \|p-\mu\|_{-1/2,h}\big) \|\bv\|_{1,h} 
+ \int_{\p\Omega_h} (\bv\cdot \bn_h)(\lambda_h-\mathring{\mu})\, ds.
\end{align*}

We then use \eqref{eqn:KProp} and 
\eqref{eqn:ShvDiff} to obtain
\begin{align*}
\int_{\p\Omega_h} (\bv\cdot \bn_h)(\lambda_h-\mathring{\mu})\, ds = 
\int_{\p\Omega_h} \big((\bv-S_h \bv)\cdot \bn_h\big)(\lambda_h-\mathring{\mu})\, ds\le C c_\delta \|\bv\|_{1,h} 
\|\lambda_h-\mathring{\mu}\|_{-1/2,h}.
\end{align*}

Setting $\bv = \bu_h-\bw$, applying the coercivity of $a_h(\cdot,\cdot)$ and Theorem \ref{thm:ApproxKer}, we obtain
\begin{equation}\label{eqn:MainEstPre}
c_1 \nu \|\bu_h-\bw\|_{1,h}
\le  C \big(\nu (1+\sigma) h^2 \|\bu\|_{H^3(\Omega)} + \|p-\mu\|_{-1/2,h}
+c_\delta  \|\lambda_h-\mathring{\mu}\|_{-1/2,h}\big)
\end{equation}
for $\bw\in \bZ_h$ satisfying \eqref{eqn:ApproxK}.

Next, let $P_h\in \mathring{Q}_h$ be the $L^2$-projection of $p$ and note that,
due to the definitions of the finite element spaces, $\int_{\Omega_h} (\Div \bv)(p-P_h)\, dx=0$
for all $\bv\in \bV_h$.
This identity, along with the inf-sup stability estimate given in Theorem \ref{thm:MainIS} yields 
\begin{align*}
\beta \|(p_h-P_h,\lambda_h-\mathring{\mu})\|
&\le \sup_{\bv\in \bV_h\backslash \{0\}} \frac{b_h(\bv,(p_h-P_h,\lambda_h-\mathring{\mu}))}{\|\bv\|_{1,h}}
= \sup_{\bv\in \bV_h\backslash \{0\}} \frac{b_h(\bv,(p_h-p,\lambda_h-\mathring{\mu}))}{\|\bv\|_{1,h}}.
\end{align*}
Using Lemma \ref{lem:LapCons}, we write the numerator as
\begin{align*}
b_h(\bv,(p_h-p,\lambda_h-\mathring{\mu}))
&= b_h(\bv,(p_h,\lambda_h)) - b_h(\bv,(p,\mathring{\mu}))\\
& = \int_{\Omega_h} {\bm f}\cdot \bv\, dx - a_h(\bu_h,\bv) +\int_{\Omega_h} (\Div \bv)p\, dx
-\int_{\p\Omega_h} (\bv\cdot \bn_h){\mu}\, ds\\
%
%
& \le C \nu h^2 \|\bu\|_{H^3(\Omega)}+ a_h(\bu-\bu_h,\bv) 
-\int_{\p\Omega_h} (\bv\cdot \bn_h)(\mu-p)\, ds.
\end{align*}
By continuity and the Cauchy-Schwarz inequality,
\begin{align}\label{eqn:phProjection}
&\beta \|(p_h-P_h,\lambda_h-\mathring{\mu})\|
\le C \big(\nu h^2 \|\bu\|_{H^3(\Omega)}+ c_2\nu (1+\sigma) \tbar{\bu-\bu_h}_{h} + \|p-\mu\|_{-1/2,h}\big)\\
&\quad\nonumber\le C \big(\nu h^2 \|\bu\|_{H^3(\Omega)}+ c_2 \nu (1+\sigma) \big(\tbar{\bu-\bw}_{h} + \|\bu_h-\bw\|_{1,h}\big)+ \|p-\mu\|_{-1/2,h}\big)\\
&\quad\nonumber\le C \big(\nu(1+\sigma) h^2 \|\bu\|_{H^3(\Omega)}+ c_2 \nu (1+\sigma) \|\bu_h-\bw\|_{1,h}+ \|p-\mu\|_{-1/2,h}\big).
\end{align}
Inserting this estimate into \eqref{eqn:MainEstPre}, we get
\begin{align}\label{eqn:FDSA}
\nu \big(c_1 - C \beta ^{-1} c_2 (1+\sigma) c_\delta\big)\|\bu_h-\bw\|_{1,h}
\le C \nu (1+\sigma)h^2 \|\bu\|_{H^3(\Omega)} + \|p-\mu\|_{-1/2,h}.
\end{align}
Using he approximation properties of the discrete kernel once again(cf.~Theorem \ref{thm:ApproxKer}), and for $c_\delta$ sufficiently small,
\begin{align*}
\|\bu-\bu_h\|_{1,h}\le C \big(h^2 \|\bu\|_{H^3(\Omega)} +\nu^{-1} \inf_{\mu\in X_h} \|p-\mu\|_{-1/2,h}\big).
\end{align*}
This establishes the velocity estimate \eqref{eqn:MainEst1}.

To obtain the estimate for the pressure approximation \eqref{eqn:MainEst2}, we
use the triangle inequality and the approximation properties of the $L^2$-projection:
\[
\|p-p_h\|_{L^2(\Omega_h)} \le \|p_h-P_h\|_{L^2(\Omega_h)}+\inf_{q_h \in \mathring{Q}_h}\|p-q_h\|_{L^2(\Omega_h)}.
\]
Inserting \eqref{eqn:phProjection} and \eqref{eqn:FDSA} into the right-hand side 
yields the desired
bound for the pressure.  Likewise, combining \eqref{eqn:phProjection}
and \eqref{eqn:FDSA} yields \eqref{eqn:MainEst3}.
\end{proof}

\section{Numerical Experiments}\label{sec-Numerics}

\begin{figure}
\centering
\includegraphics[width=2.5in,height=2.5in]{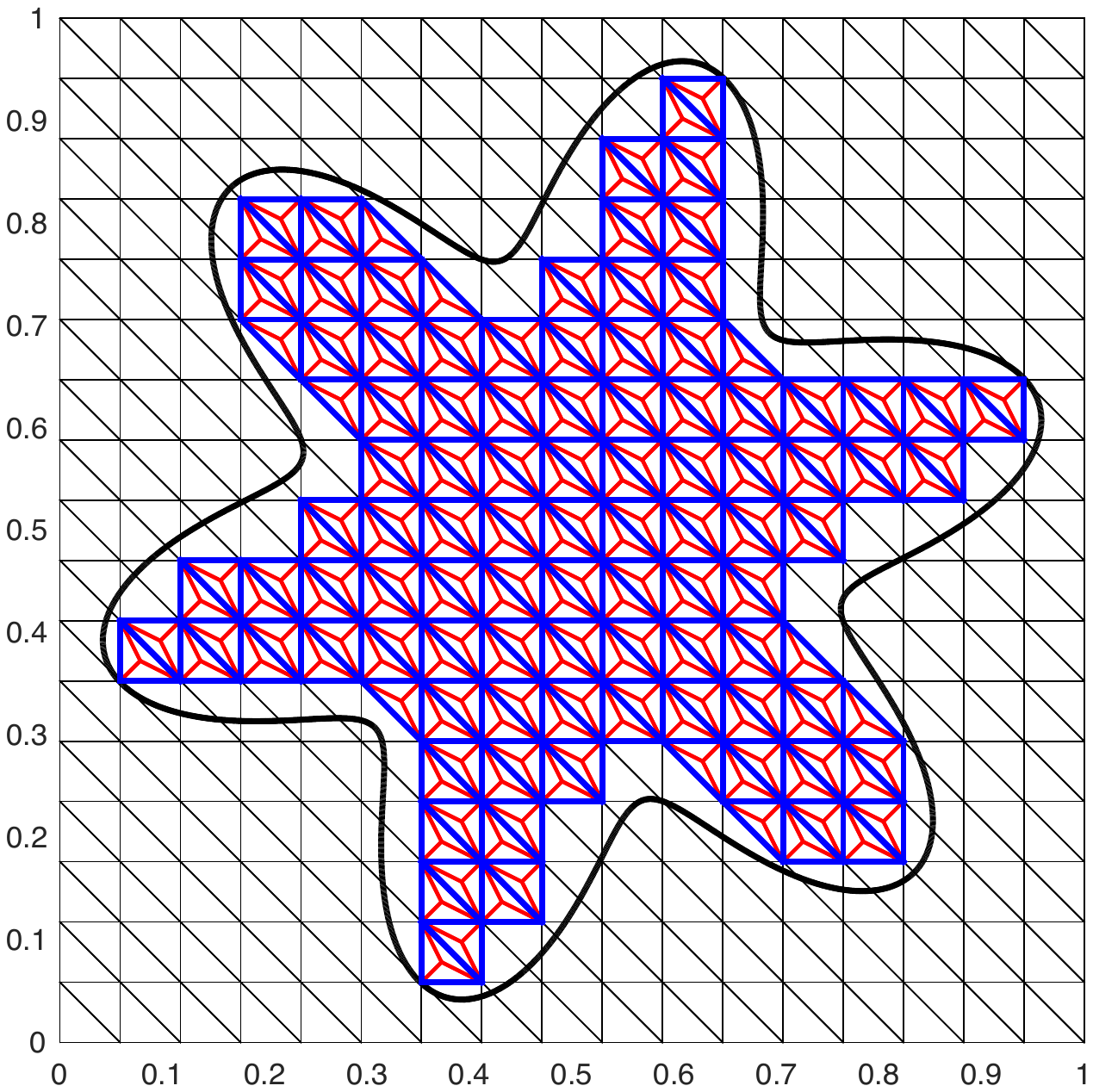}
\includegraphics[width=2.5in,height=2.5in]{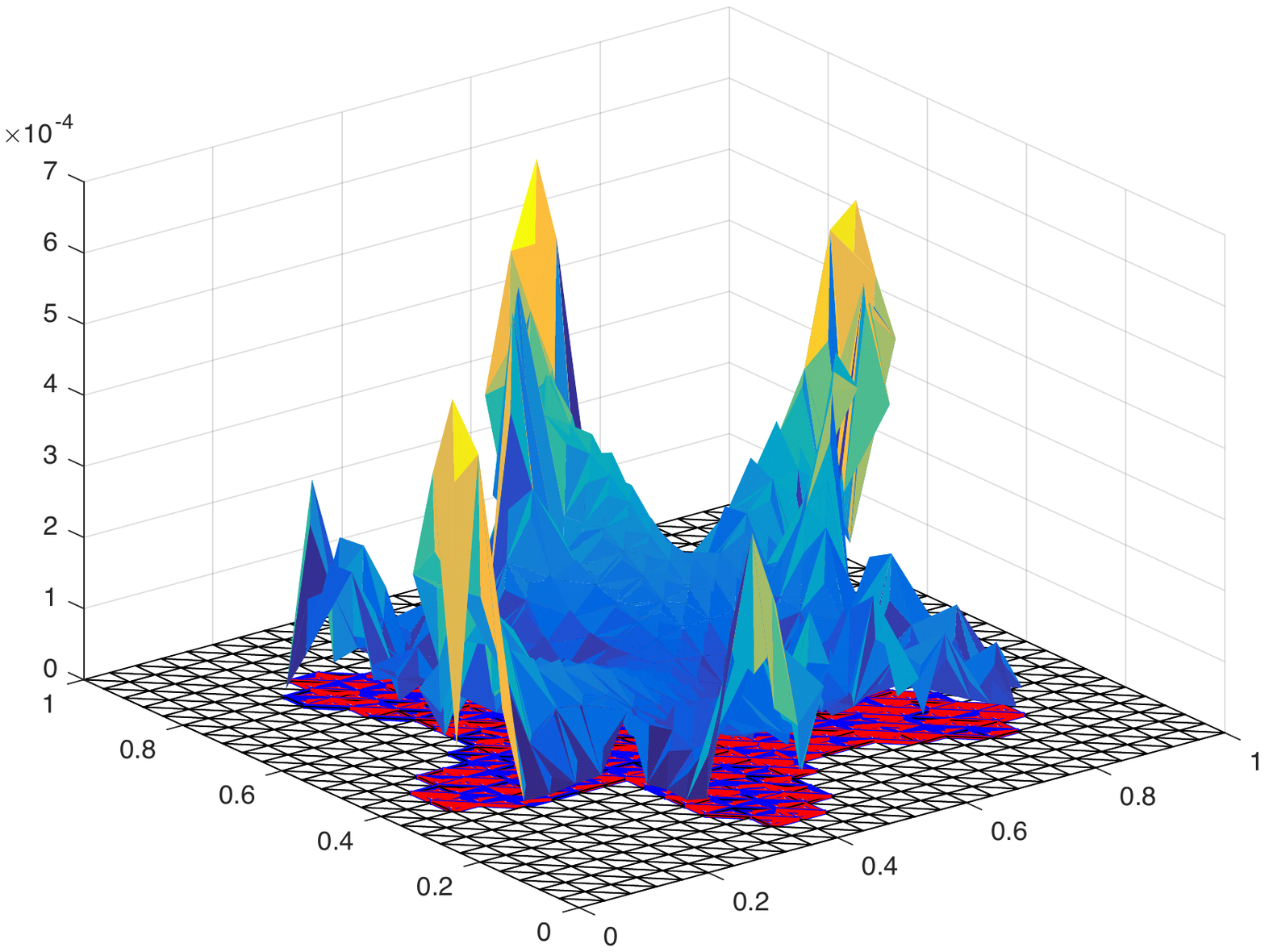}
\caption{\label{fig:Mesh}Left: The domain and mesh with $h=1/24$. 
Right: The 
graph of the error $|\bu-\bu_h|$ with exact solution \eqref{eqn:Test1ExactSoln}.}
\end{figure}

In this section we perform simple numerical experiments
of the finite element method
\eqref{eqn:LMM} which verify the theoretical rates of convergence established
in the previous sections.

In the series of tests, the domain is defined via a level set function \cite{Lehrenfeld16}
\begin{align}\label{eqn:Test1Domain}
\Omega = \{x\in \bbR^2:\ \phi(x)<0\}\text{, where}\quad
\phi = r-0.3723423423343-0.1\sin(6\theta),
\end{align}
with $r = \sqrt{(x_1-0.5)^2+(x_2-0.5)^2}$,
and $\theta = \tan^{-1}((x_2-0.5)/(x_1-0.5))$.
We take $S=(0,1)^2$, and the background
mesh $\calS_h$ to be a sequence of type I triangulations
of $S$, i.e., a mesh obtained by drawing
diagonals of a cartesian mesh; cf.~Figure \ref{fig:Mesh}.
For all tests, the Nitsche penalty parameter 
in the bilinear form $a_h(\cdot,\cdot)$ takes the value $\sigma = 40$.

The extension direction ${\bm d}$ is obtained
by solving an auxiliary $2\times 2$ nonlinear system at each
quadrature point of each boundary edge of $\calT_h^{ct}$.
In particular, for each quadrature point $x\in \p\Omega_h$, we
find $x_*\in \p\Omega$ such that
\[
\phi(x_*) = 0,\quad (\nab \phi(x_*))^\perp \cdot (x-x_*) = 0,
\]
and set ${\bm d} = (x-x_*)/|x-x_*|$ and $\delta(x) = |x-x_*|$.
The first equation ensures that $x_*$ is on the boundary $\p\Omega$,
whereas the second equation states that ${\bm d}$ is parallel
to the outward unit normal of $\p\Omega$ at $x$.

We choose the data such that
the exact solution to the Stokes problem is given by
\begin{align}\label{eqn:Test1ExactSoln}
\bu = \begin{pmatrix}
2(x_1^2-x_1+\frac14+x_2^2-x_2)(2 x_2-1)\\
-2 (x_1^2-x_1+\frac14+x_2^2-x_2)(2 x_1-1)
\end{pmatrix},\quad
p = 10(x_1^2-x_2^2)^2.
\end{align}
Because the exact solution is smooth,
Theorem \ref{thm:MainConvResult}
predicts the convergence rates
\begin{align}\label{eqn:RatesEx}
\|\nab (\bu-\bu_h)\|_{L^2(\Omega_h)} = \mathcal{O}(h^2+ \nu^{-1} h^3),\qquad
\|p-p_h\|_{L^2(\Omega_h)} = \mathcal{O}(h^2).
\end{align}

The velocity and pressure errors
are plotted in Figure \ref{fig:ErrorPlots}
for mesh parameters $h = 2^{-j}$\ ($j=3,4,5,6,7$)
and viscosities $\nu =10^{-k}$\ ($k=1,3,5$).
The results show that, for the moderately sized viscosities $\nu=10^{-1}$ and $\nu = 10^{-3}$, 
the $L^2$ and $H^1$ velocities converge with the optimal order
three and two, respectively.   We also observe larger
velocity errors for  viscosity value $\nu = 10^{-5}$, although,
rates of convergence are higher; Figure \ref{fig:ErrorPlots}
shows fourth and third order convergence in the $L^2$ and $H^1$ norms.
This behavior is consistent with the theoretical estimate \eqref{eqn:RatesEx}.
Finally, the numerical experiments show second order convergence
for the pressure approximation (with only marginal differences for different
viscosity values) and divergence errors comparable to machine epsilon.

\begin{center}
\begin{figure}
\begin{tikzpicture}[scale=0.9]
\begin{loglogaxis}[title = {$\|\bu-\bu_h\|_{L^2(\Omega_h)}$},xlabel={$h$},ylabel={},legend pos=south west,x dir=reverse]


\addplot coordinates {
(0.125,  4.89743e-03)
(0.062,  1.69819e-04)  
(0.029,  2.07378e-05)  
(0.016,  2.67332e-06)  
(0.008,  4.21456e-07)  
};

\addplot coordinates {
(0.125,  1.13930e-02)
(0.062,  4.58909e-04)
(0.029,  3.57077e-05)  
(0.016,  4.08034e-06)
(0.008,  4.70075e-07)
};

\addplot coordinates {
(0.125,  1.17523e+00)
(0.062,  4.26709e-02) 
(0.029,  2.95412e-03) 
(0.016,  3.08615e-04) 
(0.008,  2.07830e-05) 
};


\legend{$\nu=10^{-1}$,$\nu=10^{-3}$,$\nu=10^{-5}$}
\end{loglogaxis}
\end{tikzpicture}\quad
\begin{tikzpicture}[scale=0.9]
\begin{loglogaxis}[title={$\|\nab (\bu-\bu_h)\|_{L^2(\Omega_h)}$},xlabel={$h$},ylabel={},legend pos=north east,x dir=reverse]


\addplot coordinates {
(0.125,  9.43747e-02)
(0.062,  8.68162e-03)
(0.029,  2.01864e-03)
(0.016,  5.51167e-04)
(0.008,  1.39274e-04)
};

\addplot coordinates {
(0.125,  1.99512e-01)
(0.062,  1.35781e-02)  
(0.029,  2.32173e-03) 
(0.016,  5.63735e-04)  
(0.008,  1.39636e-04) 
};

\addplot coordinates {
(0.125,  2.07600e+01)
(0.062,  1.04710e+00) 
(0.029,  1.16325e-01)  
(0.016,  1.17446e-02)  
(0.008,  1.04713e-03) 
};


\legend{$\nu=10^{-1}$,$\nu=10^{-3}$,$\nu=10^{-5}$}
\end{loglogaxis}
\end{tikzpicture}\bigskip\\
\begin{tikzpicture}[scale=0.9]
\begin{loglogaxis}[title={$\|p-p_h\|_{L^2(\Omega_h)}$},xlabel={$h$},ylabel={},legend pos=north east,x dir=reverse]
%
\addplot coordinates {
(0.125,  1.75052e-01)
(0.062,  5.08658e-03) 
(0.029,  1.15511e-03) 
(0.016,  2.90247e-04) 
(0.008,  7.34069e-05) 
};

\addplot coordinates {
(0.125,  9.38550e-03)
(0.062,  2.75034e-03)  
(0.029,  7.17926e-04)  
(0.016,  2.18918e-04)
(0.008,  5.59244e-05)
};

\addplot coordinates {
(0.125,  9.44930e-03)
(0.062,  2.75002e-03)
(0.029,  7.17871e-04)
(0.016,  2.18910e-04)
(0.008,  5.59224e-05)
};

\legend{$\nu=10^{-1}$,$\nu=10^{-3}$,$\nu=10^{-5}$}
\end{loglogaxis}
\end{tikzpicture}\quad
\begin{tikzpicture}[scale=0.9]
\begin{loglogaxis}[title={$\|\Div \bu_h\|_{L^\infty(\Omega_h)}$},xlabel={$h$},ylabel={},legend pos=north east,x dir=reverse]
\addplot coordinates {
(0.125,  8.57092e-13)
(0.062,  2.39364e-13)
(0.029,  4.47642e-13)
(0.016,  7.43405e-13)
(0.008,  1.08713e-11)
};

\addplot coordinates {
(0.125,  9.16032e-11)
(0.062,  6.42508e-12) 
(0.029,  4.05365e-12) 
(0.016,  1.90781e-12) 
(0.008,  1.74296e-11)
};

\addplot coordinates {
(0.125,  9.16363e-09)
(0.062,  6.27226e-10)
(0.029,  4.27022e-10)
(0.016,  4.39302e-11)
(0.008,  5.71809e-12)
};

\legend{$\nu=10^{-1}$,$\nu=10^{-3}$,$\nu=10^{-5}$}
\end{loglogaxis}
\end{tikzpicture}
\caption{\label{fig:ErrorPlots}Errors for the velocity and pressure
for a sequence of meshes on domain \eqref{eqn:Test1Domain}
and exact solution \eqref{eqn:Test1ExactSoln}.}
\end{figure}
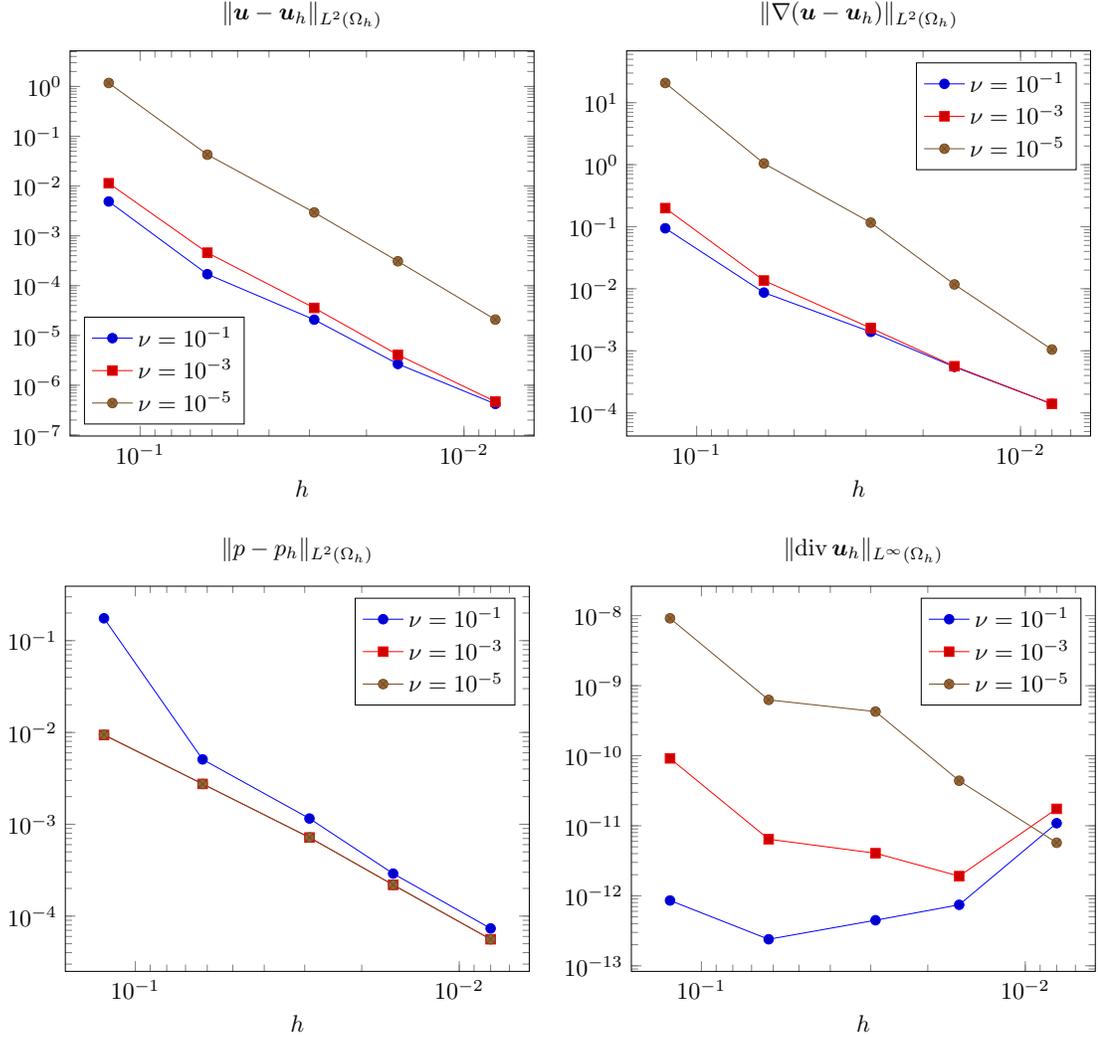
\end{center}


%
%
%
%
%
%
%

%
%
%
%
%
%

\section{Concluding Remarks}\label{sec-conclusion}
This paper constructed
a uniformly stable and divergence-free method 
for the Stokes problem on unfitted meshes 
using a boundary correction approach.  While the method
is not pressure-robust, a Lagrange multiplier
enforcing the normal boundary conditions
is included to mitigate the affect of the pressure
contribution in the velocity error.  Theoretical results and numerical experiments
show that the method converges with optimal order.

The presentation is confined
to the two dimensional setting, however
many of the results extend to 3D as well.
For example, the proof of inf-sup stability given in Lemma 
\ref{lem:LBB}
applies mutatis mutandis to the
the three-dimensional Scott-Vogelius  pair.
On the other hand, inf-sup stability 
of the velocity-Lagrange multiplier pairing (cf.~Lemma \ref{lem:LMStab}),
and its dependence on the geometry of the computational mesh
is less obvious.  We plan to address this issue 
in the near future.


\begin{thebibliography}{10}

\bibitem{ArnoldQin92}
D.N.~Arnold and J.~Qin, 
{\em Quadratic velocity/linear pressure Stokes elements}, 
in Advances in Computer Methods for Partial Differential Equations, VII, R. Vichnevetsky, 
D. Knight, and G. Richter, eds., IMACS, New Brunswick, NJ, 1992, pp. 28?34.

\bibitem{ACS20A}
N.M.~Atallah, C.~Canuto, and G.~Scovazzi,
{\em The second-generation shifted boundary method and its numerical analysis},
Comput. Methods Appl. Mech. Engrg., 372:113--3471,  2020.

\bibitem{ACS20B}
N.M.~Atallah, C.~Canuto, and G.~Scovazzi,
{\em Analysis of the shifted boundary method for the Stokes problem},
Comput. Methods Appl. Mech. Engrg. 358, 2020.

\bibitem{ACS20C}
N.M.~Atallah, C.~Canuto, and G.~Scovazzi,
{\em Analysis of the Shifted Boundary Method for the Poisson Problem in General Domains}
	arXiv:2006.00872, 2020. 
	
\bibitem{BelenliRebTone15}
M.A.~Belenli, L.G.~Rebholz, and F.~Tone,
{\em A note on the importance of mass conservation in long-time stability of Navier-Stokes simulations using finite elements}, 
Appl. Math. Lett., 45:98--102, 2015.

\bibitem{BDT72}
J.H.~Bramble, T.~Dupont, and V.~Thom\'ee, 
{\em Projection methods for Dirichlet's problem in approximating polygonal domains with boundary-value corrections},
Math. Comp., 26:869--879, 1972.

\bibitem{BK94}
J.H.~Bramble and T.J.~King, 
{\em A robust finite element method for nonhomogeneous Dirichlet problems in domains with curved boundaries},
Math. Comp., 63(207):1--17,  1994.

\bibitem{BrennerScott08}
S.C.~Brenner and L.R.~Scott, 
{\em The Mathematical Theory of Finite Element Methods},
3rd ed., Texts in Applied Mathematics, vol. 15, Springer, New York, 2008.

\bibitem{BurmanHansboLarson20}
E.~Burman, P.~Hansbo, M.G.~Larson,
{\em Dirichlet boundary value correction using Lagrange multipliers},
BIT, 60(1):235--260, 2020.

\bibitem{RebholzEtAl17}
S.~Charnyi, T.~Heister, M.A.~Olshanskii, and L.G.~Rebholz,
{\em On conservation laws of Navier-Stokes Galerkin discretizations},
J. Comput. Phys., 337:289--308, 2017.

\bibitem{CPBG19}
J.~Cheung, M.~Perego, P.~Bochev, and M.~Gunzburger,
{\em Optimally accurate higher-order finite element methods for polytopial approximations of domains with smooth boundaries},
Math. Comp., 88(319):2187--2219, 2019.

\bibitem{CockburnSolano12}
B.~Cockburn and M.~Solano,
{\em Solving Dirichlet boundary-value problems on curved domains by extensions from subdomains},
SIAM J. Sci. Comput., 34(1):A497--A519, 2012.


\bibitem{GilbargTrudinger}
D.~Gilbarg and N.S.~Trudinger,
{\em Elliptic partial differential equations of second order},
Reprint of the 1998 edition. Classics in Mathematics. Springer--Verlag, Berlin, 2001.

\bibitem{GuzmanNeilan18}
J.~Guzm\'an and M.~Neilan,
{\em inf-sup stable finite elements on barycentric refinements producing divergence-free approximations in arbitrary dimensions},
SIAM J. Numer. Anal., 56(5):2826--2844, 2018.

\bibitem{GuzmanNeilan14}
J.~Guzm\'an and M.~Neilan,
{\em Conforming and divergence-free Stokes elements on general triangular meshes},
Math. Comp., 83(285):15--36, 2014.

\bibitem{GuzmanMaxim18}
J.~Guzm\'an and M.~Olshanskii,
{\em Inf-sup stability of geometrically unfitted Stokes finite elements},
Math. Comp., 87(313):2091--2112, 2018.

\bibitem{DGS20}
T.~Dupont, J.~Guzm\'an, and R.~Scott,
{\em Obtaining higher-order Galerkin accuracy when the boundary is polygonally approximated},
	arXiv:2001.03082, 2020.
	
	\bibitem{NeilanFalk13}
R.S.~Falk and M.~Neilan,
{\em Stokes complexes and the construction of stable finite elements with pointwise mass conservation},
SIAM J. Numer. Anal., 51(2):1308--1326, 2013.

\bibitem{HeisterLeo16}
T.~Heister, L.G.~Rebholz, and M.~Xiao,
{\em Flux-preserving enforcement of inhomogeneous Dirichlet boundary conditions for strongly divergence--free mixed finite element methods for flow problems},
J. Math. Anal. Appl., 438(1):507--513, 2016.

\bibitem{HowellWalkington11}
J.S.~Howell and N.J.~Walkington,
{\em Inf-sup conditions for twofold saddle point problems},
Numer. Math., 118(4):663--693, 2011.

\bibitem{JohnetAl17}
V.~John, A.~Linke, C.~Merdon, M.~Neilan, and L.G.~Rebholz,
{\em On the divergence constraint in mixed finite element methods for incompressible flows},
SIAM Rev., 59(3):492--544, 2017.

\bibitem{Lehrenfeld16}
C.~Lehrenfeld,
{\em High order unfitted finite element methods on level set domains using isoparametric mappings},
Comput. Methods Appl. Mech. Engrg., 300:716--733, 2016.

\bibitem{Linke09}
A.~Linke
{\em Collision in a cross-shaped domain--a steady 2d Navier-Stokes example demonstrating the importance of mass conservation in CFD},
Comput. Methods Appl. Mech. Engrg., 198(41--44):3278--3286, 2009.

\bibitem{MainScovazzi18}
A.~Main and G.~Scovazzi,
{\em The shifted boundary method for embedded domain computations. Part I: Poisson and Stokes problems},
J. Comput. Phys., 372:972--995, 2018.

\bibitem{NeilanOtus21}
M.~Neilan and B.~Otus,
{\em Divergence-free Scott--Vogelius elements on curved domains},
SIAM J. Numer. Anal., 59(2):1090--1116,  2021.

\bibitem{Nitsche71}
 J.~Nitsche, 
\"Uber ein Variationsprinzip zur L\"osung von Dirichlet-Problemen bei Verwendung von Teilräumen, die keinen Randbedingungen unterworfen sind,
Abh. Math. Sem. Univ. Hamburg, 36:9--15, 1971.

\bibitem{OSZ19}
R.~Oyarz\'{u}a, M.~Solano, and P.~Z\'{u}\~{n}iga,
{\em A high order mixed-{FEM} for diffusion problems on curved
              domains},
J. Sci. Comput., 79(1):49--78, 2019.

\bibitem{OSZ20}
R.~Oyarz\'{u}a, M.~Solano, and P.~Z\'{u}\~{n}iga,
 {\em A priori and a posteriori error analyses of a high order
              unfitted mixed-{FEM} for {S}tokes flow},
Comput. Methods Appl. Mech. Engrg., 360, 112780, 2020.


\bibitem{QinThesis94}
J.~Qin, 
{\em On the Convergence of Some Low Order Mixed Finite Elements for Incompressible Fluids}, 
Ph.D. thesis, The Pennsylvania State University, State College, PA, 1994.

\bibitem{RiviereBook}
B.~Rivi\`ere,
{\em Discontinuous {G}alerkin methods for solving elliptic and
              parabolic equations},
              Theory and implementation. Frontiers in Applied Mathematics, 35. Society for Industrial and Applied Mathematics (SIAM), Philadelphia, PA, 2008.

\bibitem{SchroederEtal18}
P.W.~Schroeder, C.~Lehrenfeld, A.~Linke, and G.~Lube, 
{\em Towards computable flows and robust estimates for inf-sup stable FEM applied to the time-dependent incompressible Navier-Stokes equations},
SeMA J., 75(4):629--653, 2018.

\bibitem{SchroederLube18}
P.W.~Schroeder and G.~Lube, 
{\em Divergence-free $H(div)$-FEM for time-dependent incompressible flows with applications to high Reynolds number vortex dynamics},
J. Sci. Comput., 75(2):830--858, 2018.

\bibitem{Zhang05}
S.~Zhang,
{\em A new family of stable mixed finite elements for the 3D Stokes equations},
Math. Comp., 74(250):543--554, 2005.

\end{thebibliography}
\end{document}